\theoremstyle{plain}
\newtheorem*{theorem*}{Theorem}
\newtheorem*{remark*}{Remark}
\newtheorem*{example*}{Example}
\newtheorem{lemma}{Lemma}[subsection]
\newtheorem{proposition}[lemma]{Proposition}
\newtheorem{corollary}[lemma]{Corollary}
\newtheorem{theorem}[lemma]{Theorem}
\newtheorem*{conjecture*}{Conjecture}
\theoremstyle{definition}
\newtheorem{definition}[lemma]{Definition}
\newtheorem{example}[lemma]{Example}
\theoremstyle{remark}
\newtheorem{remark}[lemma]{Remark}
\newtheorem{notation}[lemma]{Notation}
 \newcommand{\idealI}{\mathfrak{I}}
\newcommand{\Hom}{\operatorname{Hom}}
\newcommand{\End}{\operatorname{End}}
\newcommand{\bC}{{\mathbb C}}
\newcommand{\bZ}{{\mathbb Z}}
\newcommand{\lam}{{\lambda}}
\newcommand{\fh}{{\mathfrak{h}}}
\newcommand{\T}{\tau}
\newcommand{\abs}[1]{\left|{#1}\right|}
\newcommand{\Dab}{\underline{Rep}^{ab}(S_{t})}
\newcommand{\InnaA}[1]{{{#1}}}
\newcommand{\InnaB}[1]{{{#1}}}
\def\quotient#1#2{%
    \raise1ex\hbox{$#1$}\Big/\lower1ex\hbox{$#2$}%
}
\begin{document}

\date{\today}
\title{Deligne categories and reduced Kronecker coefficients}
 \author{Inna Entova Aizenbud}
\address{Inna Entova Aizenbud,
Massachusetts Institute of Technology,
Department of Mathematics,
Cambridge, MA 02139 USA.}
\email{inna.entova@gmail.com}

\begin{abstract}
 The Kronecker coefficients are the structural constants for the tensor categories of representations of the symmetric groups, namely, given three partitions $\lam, \mu, \T$ of $n$, the multiplicity of $\lambda$ in $\mu \otimes \T$ is called the Kronecker coefficient $g^{\lam}_{\mu, \T}$.
 
 When the first part of each of the partitions is taken to be very large (the remaining parts being fixed), the values of the appropriate Kronecker coefficients stabilize; the stable value is called the reduced (or stable) Kronecker coefficient. These coefficients also generalize the Littlewood-Richardson coefficients, and have been studied quite extensively. 
 
 In this paper, we show that reduced Kronecker coefficients appear naturally as structure constants of Deligne categories $\underline{Rep}(S_t)$. This allows us to interpret various properties of the reduced Kronecker coefficients as categorical properties of Deligne categories $\underline{Rep}(S_t)$.
\end{abstract}
\keywords{Deligne categories, Kronecker coefficients}
\maketitle
\setcounter{tocdepth}{3}
\section{Introduction}
The Kronecker coefficients are the structural constants for the semisimple categories $Rep(S_n)$. Namely, considering two irreducible representations $\mu, \T$ of $S_n$, we can decompose the tensor product $\mu \otimes \T$ into a direct sum of irrreducible representations of $S_n$. The multiplicity of $\lambda$ in $\mu \otimes \T$ is called {\it the Kronecker coefficient $g^{\lam}_{\mu, \T}$} (a good reference for Kronecker coefficients is \cite[Chapter 1, Par. {\rm V}]{Mac}).

Consider three arbitrary Young diagrams $\lam, \mu, \T$. For $n>>0$, denote by $\widetilde{\lam}(n)$ the Young diagram of size $n$ obtained by adding a top row of size $n-\abs{\lam}$ to $\lam$ (similarly for $\mu, \T$). It was noticed by Murnaghan in \cite{Mu1} that the sequence $\{g^{\widetilde{\lam}(n)}_{\widetilde{\mu}(n), \widetilde{\T}(n)} \}_{n>>0}$ stabilizes, and the stable value of the sequence was called the {\it reduced Kronecker coefficient $\bar{g}^{\lam}_{\mu, \T}$} associated with the triple $(\lam, \mu, \T)$. 

The reduced Kronecker coefficients were subsequently studied in \cite{Mu2}, \cite{BOR}, \cite{BDO}, and other papers.

It turns out that reduced Kronecker coefficients occur naturally in Deligne categories $\underline{Rep}(S_{t}), t \in \bC$, which are interpolations of the categories of finite-dimensional representations of the symmetric groups over the field $\bC$ of complex numbers. These Karoubian rigid symmetric monoidal categories were defined and studied by P. Deligne in \cite{D}, and subsequently by J. Comes and V. Ostrik in \cite{CO}, \cite{CO2}.

A detailed description of the Deligne categories $\underline{Rep}(S_{t})$, as well as their abelian envelopes, is given is Section \ref{sec:Del_cat_S_nu}. Below we give the main properties of these categories.

As it was said before, the categories $\underline{Rep}(S_{t})$ interpolate the categories of representations of symmetric groups; namely, for $n \in \bZ_+$, the category $\underline{Rep}(S_{t=n})$ admits a full, essentially surjective symmetric monoidal functor to the category of finite-dimensional representations of the symmetric group $S_n$.

For $t \not\in \bZ_+$, the category $\underline{Rep}(S_{t})$ is a semisimple abelian tensor category, the simple objects being parameterized by arbitrary Young diagrams (of any size). 

Note that for $n \in \bZ_+$, the category $\underline{Rep}(S_{t=n})$ is not abelian. It can be embedded as a full monoidal subcategory into a tensor (i.e. rigid symmetric monoidal abelian) category $\underline{Rep}^{ab}(S_{t=n})$; the latter is not semisimple, but its structure can be described quite explicitly (c.f. \cite[Section 5]{EA}).

For a generic value of $t$, the structural constants of $\underline{Rep}(S_{t})$ as a tensor category turn out to be exactly the reduced Kronecker coefficients; that is, denoting by $X_{\lambda}$ the simple object of $\underline{Rep}(S_{t})$ ($t \not\in \bZ_+$) which corresponds to the Young diagram $\lambda$, we get:

$$ X_{\mu} \otimes X_{\T} = \bigoplus_{\lambda \text{ is a partition of arbitrary size}} \bC^{\bar{g}^{\lambda}_{\mu, \tau}} \otimes X_{\lambda}$$

This approach provides a natural environment for the reduced Kronecker coefficients. It allows us to interpret various facts about reduced Kronecker coefficients in terms of Deligne categories, and obtain new, previously unknown formulas.

\subsection{Structure of the paper}
In Section \ref{sec:Del_cat_S_nu}, we remind the relevant facts about Deligne's category $\underline{Rep}(S_t)$.

In Section \ref{sec:red_Kron_coeff_and_Deligne_cat}, we define the reduced Kronecker coefficients in temrs of Deligne categories.

In Section \ref{sec:prop_Kron_coeff} we prove some known properties of Kronecker coefficients using the machinery of Deligne categories; we also prove some previously unknown formulas in Subsection \ref{ssec:new_form_Kron}.

\section{Notation and definitions}\label{sec:notation}
The base field throughout the paper will be $\bC$.
\subsection{Karoubian categories}
\begin{definition}[Karoubian category]
We will call a category $\mathcal{A}$ Karoubian\footnote{Deligne calls such categories "pseudo-abelian" (c.f. \cite[1.9]{D}). } if it is an additive category, and every idempotent morphism is a projection onto a direct factor.
\end{definition}

\begin{definition}[Block of a Karoubian category]
 A block in an Karoubian category is a full subcategory generated by an equivalence class of indecomposable objects, defined by the minimal equivalence relation such that any two indecomposable objects with a non-zero morphism between them are equivalent.
\end{definition}

\subsection{Symmetric group and Young diagrams}
\begin{notation}
\mbox{}
 \begin{itemize}[leftmargin=*]
\item $S_n$ will denote the symmetric group ($n \in \bZ_+$).

 \item The notation $\lambda$ will stand for a partition (weakly decreasing sequence of non-negative integers), a Young diagram $\lambda$, and the corresponding irreducible representation of $S_{\abs{\lambda}}$. Here $\abs{\lambda}$ is the sum of entries of the partition, or, equivalently, the number of cells in the Young diagram $\lambda$.

 \item \InnaA{All the Young diagrams will be considered in the English notation, i.e. the lengths of the rows decrease from top to bottom.}

\item The length of the partition $\lambda$, i.e. the number of rows of Young diagram $\lambda$, will be denoted by $\ell(\lambda)$.

\item The $i$-th entry of a partition $\lambda$, as well as the length of the $i$-th row of the corresponding Young diagram, will be denoted by $\lambda_i$ (if $i>\ell(\lambda)$, then $\lambda_i :=0$). 
%

\item $\fh$ (in context of representations of $S_n$) will denote the permutation representation of $S_n$, i.e. the $n$-dimensional representation $\bC^n$ with $S_n$ acting by $g.e_j =e_{g(j)}$ on the standard basis $e_1, .., e_n$ of $\bC^n$.

%

\item \InnaA{For any Young diagram $\lambda$ and an integer $n$ such that $n \geq \abs{\lambda} +\lambda_1$, we denote by $\widetilde{\lambda}(n)$ the Young diagram obtained by adding a row of length $n - \abs{\lambda}$ on top of $\lambda$.}

\end{itemize}

\end{notation}
\InnaA{

\begin{example}
Consider the Young diagram $\lambda$ corresponding to the partition $(6,5,4,1)$:
 $$\yng(6,5,4,1) $$ 
 The length of $\lambda$ is $4$, \InnaB{and $\abs{\lambda}=16$}. For $n = 23$, we have:
 $$\widetilde{\lambda}(n) = \yng(7,6,5,4,1)$$
\end{example}
}

\section{Deligne category $\underline{Rep}(S_{t})$}\label{sec:Del_cat_S_nu}
This section follows \cite{CO, D, EA}. Throughout the paper, we will use the parameter $t$ instead of the parameter $t$ used in Introduction.
\subsection{General description}\label{ssec:S_nu_general}
%
For any $t \in \bC$, the category $\underline{Rep}(S_{t})$ is generated, as a $\bC$-linear Karoubian tensor category, by one object, denoted $\fh$. This object is the analogue of the permutation representation of $S_n$, and any object in $\underline{Rep}(S_{t})$ is a direct summand in a direct sum of tensor powers of $\fh$.

For $t \notin \bZ_{+}$, $\underline{Rep}(S_{t})$ is a semisimple abelian category.

\begin{notation}
 We will denote Deligne's category for integer value $n \geq 0$ of $t$ as $\underline{Rep}(S_{t=n})$, to distinguish it from the classical category $Rep(S_{n})$ of representations of the symmetric group $S_{n}$. Similarly for other categories arising in this text.
\end{notation}

If $t$ is a non-negative integer, then the category $\underline{Rep}(S_{t})$ has a tensor ideal $\idealI_{t}$, called the ideal of negligible morphisms (this is the ideal of morphisms $f: X \longrightarrow Y$ such that $tr(fu)=0$ for any morphism $u: Y \longrightarrow X$). In that case, the classical category $Rep(S_n)$ of finite-dimensional representations of the symmetric group for $n:=t$ is equivalent to $\underline{Rep}(S_{t=n})/\idealI_{t}$ (equivalent as Karoubian rigid symmetric monoidal categories).

The full, essentially surjective functor $\underline{Rep}(S_{t=n}) \rightarrow Rep(S_n)$ defining this equivalence will be denoted by $\mathcal{S}_n$.

Note that $\mathcal{S}_n$ sends $\fh$ to the permutation representation of $S_n$.
\begin{remark}
 Although $\underline{Rep}(S_{t})$ is not semisimple and not even abelian when $t \InnaA{=n} \in \bZ_+$, a weaker statement holds (see \cite[Proposition 5.1]{D}, Lemma \ref{lem:ss_subcat_Del}): consider the full subcategory $\underline{Rep}(S_{t \InnaA{=n}})^{( \InnaA{n}/2)}$ of $\underline{Rep}(S_{t})$ whose objects are directs summands of sums of $\fh^{\otimes m}, 0\leq m \leq  \InnaA{\frac{n}{2}}$. This subcategory is abelian semisimple, and the restriction $\InnaA{\mathcal{S}}_{n} \lvert_{\underline{Rep}(S_{t \InnaA{=n}})^{( \InnaA{n}/{2})}}$ is fully faithful.
\end{remark}

The indecomposable objects of $\underline{Rep}(S_{t})$, regardless of the value of $t$, are \InnaA{parametrized (up to isomorphism)} by all Young diagrams (of arbitrary size). We will denote the indecomposable object in $\underline{Rep}(S_{t})$ corresponding to the Young diagram $\T$ by $X_{\T}$.

For non-negative integer $t =:n$, we have: the partitions $\lambda$ for which $X_{\lambda}$ has a non-zero image in the quotient $\underline{Rep}(S_{t \InnaA{=n}})/\idealI_{t\InnaA{=n}} \cong Rep(S_n)$ are exactly the $\lambda$ for which $\lambda_1+\abs{\lambda} \leq n$.

If $\lambda_1+\abs{\lambda} \leq n$, then the image of $\lambda$ in $Rep(S_n)$ is the irreducible representation of $S_n$ corresponding to the Young diagram $\widetilde{\lambda}(n)$ (c.f. notation in Section \ref{sec:notation}).

\InnaA{This allows one to intuitively treat} the indecomposable objects of $\underline{Rep}(S_{t})$ as if they were parametrized by ``Young diagrams with a very long top row''. The indecomposable object $X_{\lambda}$ would be treated as if it corresponded to $\widetilde{\lambda}(t)$, i.e. a Young diagram obtained by adding a very long top row (``of size $t -\abs{\lambda}$''). This point of view is useful to understand how to extend constructions for $S_n$ involving Young diagrams to $\underline{Rep}(S_{t})$.

\begin{example}
 The indecomposable object $X_{\lambda}$, where $\lambda={\tiny \yng(6,5,4,1)}$ can be thought of as a Young diagram with a ``very long top row of length $(t- 16)$'':
$$ \yng(35,6,5,4,1)$$
\end{example}

%
%

%
%
\subsection{Lifting objects}\label{ssec:nu_class_and_lift}
We start with an equivalence relation on the set of all Young diagrams, defined in \cite[Definition 5.1]{CO}:
\begin{definition}
Let $\lambda$ be any Young diagram, and set
$$\mu_{\lambda}(t) = (t -\abs{\lambda}, \lambda_1-1, \lambda_2-2, ...)$$
Given two Young diagrams $\lambda, \lambda'$, denote $\mu_{\lambda}(t) =: (\mu_0, \mu_1, ...), \mu_{\lambda'}(t) =: (\mu'_0, \mu'_1, ...)$.

We put $\lambda \stackrel{t}{\sim} \lambda'$ if there exists a bijection $f: \bZ_+ \rightarrow \bZ_+$ such that $\mu_i = \mu'_{f(i)}$ for any $i \geq 0$.

\end{definition}
We will call a $\stackrel{t}{\sim}$-class {\it trivial} if it contains exactly one Young diagram.

The following lemma is proved in \cite[Corollary 5.6, Proposition 5.8]{CO}:
\begin{lemma}\label{lem:nu_classes_struct}
\mbox{}
 \begin{enumerate}
 \item If $t \notin \bZ_+$, then any Young diagram $\lambda$ lies in a trivial $\stackrel{t}{\sim}$-class.
  \item The non-trivial $\stackrel{t}{\sim}$-classes are parametrized by all Young diagrams $\lambda$ such that $\widetilde{\lambda}(t)$ is a Young diagram (in particular, $t \in \bZ_+$), and are of the form $ \{\lambda^{(i)}\}_i$, with
\begin{align*}                                                                                                                                                                                                                                       &\lambda=\lambda^{(0)} \subset \lambda^{(1)} \subset \lambda^{(2)} \subset ... \\
&\text{ and } \lambda^{(i+1)} \setminus \lambda^{(i)} = \text{ strip in row } i+1 \text{ of length } \lambda_i -\lambda_{i+1} +1 \text{ for } i>0,\\
&\text{ and } \lambda^{(1)} \setminus \lambda^{(0)} = \text{ strip in row } 1 \text{ of length } t -\abs{\lambda} -\lambda_1 +1.                                                                                                                                                                                                                                    \end{align*}
 \end{enumerate}

\end{lemma}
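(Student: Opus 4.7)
The plan is to encode each Young diagram $\lam$ by its multiset of $\mu$-entries $M_\lam := \{t - \abs{\lam}\} \cup \{\lam_j - j : j \geq 1\}$ and to show that reconstructing a partition from such a multiset amounts to picking one entry to play the role of $\mu'_0 = t - \abs{\lam'}$ and sorting the rest strictly decreasingly as $\mu'_1 > \mu'_2 > \cdots$, with $\lam'_j := \mu'_j + j$. A finite truncation at row $N$ yields the identity $(t - \abs{\lam}) + \sum_{j=1}^N (\lam_j - j) = t - \binom{N+1}{2}$, whose right-hand side depends only on $t$ and $N$; this forces $\mu'_0 = t - \abs{\lam'}$ automatically and makes the reconstruction well-defined whenever the leftover entries are distinct. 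In particular, $\lam \stackrel{t}{\sim} \lam'$ is equivalent to $M_\lam = M_{\lam'}$ as multisets.

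For part (1), I would note that when $t \notin \bZ_+$ the entry $t - \abs{\lam}$ is the unique non-integer element of $M_\lam$. Since $t - \abs{\lam'}$ is also non-integer for any candidate $\lam'$, the reconstruction must take $\mu'_0 = t - \abs{\lam}$, so $\abs{\lam'} = \abs{\lam}$; the two strictly decreasing integer tails then must agree term by term, forcing $\lam' = \lam$.

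For part (2), with $t = n \in \bZ_+$, I would first show that the class of $\lam$ is non-trivial if and only if the entries of $M_\lam$ are pairwise distinct. If $t - \abs{\lam} = \lam_k - k$ for some $k \geq 1$, then strict decrease of the tail forces any reconstruction to put the repeated value into position $0$, recovering $\lam$. Conversely, if $M_\lam$ has no repetitions, then each element of $M_\lam$ can serve as $\mu'_0$, and distinct choices yield distinct partitions because the invariant $\abs{\lam'} = t - \mu'_0$ separates them. Within a non-trivial class, I would single out the representative $\lam^{(0)}$ for which $t - \abs{\lam^{(0)}}$ is the maximum of $M_{\lam^{(0)}}$; the inequality $t - \abs{\lam^{(0)}} \geq \lam^{(0)}_1$ characterizing this maximum is precisely the statement that $\widetilde{\lam^{(0)}}(t)$ is a Young diagram, giving the claimed parametrization. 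The remaining members $\lam^{(i)}$ arise by letting $\mu'_0$ be the $(i+1)$-th largest element of $M_{\lam^{(0)}}$, namely $\lam^{(0)}_i - i$.

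The main bookkeeping obstacle will be verifying that the step $\lam^{(i-1)} \subset \lam^{(i)}$ adds precisely a horizontal strip in row $i$ of the stated length. Substituting the reconstruction formula yields $\lam^{(i)}_j = \lam^{(i-1)}_j$ for $j \neq i$ and $\lam^{(i)}_i - \lam^{(i-1)}_i = \mu_{i-1} - \mu_i$; this difference equals $\lam^{(0)}_{i-1} - \lam^{(0)}_i + 1$ for $i \geq 2$ and $t - \abs{\lam^{(0)}} - \lam^{(0)}_1 + 1$ for $i = 1$, matching the lemma. Strict decrease of the $\mu$-sequence ensures that these added cells indeed form a valid horizontal strip confined to a single row, which completes the argument.
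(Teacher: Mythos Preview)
The paper does not give its own proof of this lemma; it simply cites \cite[Corollary 5.6, Proposition 5.8]{CO}. Your multiset analysis is essentially the combinatorics underlying that reference, and the outline is correct: the equivalence $\lambda \stackrel{t}{\sim} \lambda'$ is exactly equality of the multisets $M_\lambda$, and reconstructing a partition amounts to choosing which entry plays the role of $\mu'_0$.

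One small gap: in part~(1) you claim that for $t \notin \bZ_+$ the entry $t - \abs{\lambda}$ is the unique non-integer of $M_\lambda$, but this fails when $t$ is a negative integer. The fix uses your own distinctness criterion from part~(2): if $t \in \bZ$ with $t < 0$, set $k := \abs{\lambda} - t$; then $k \geq \abs{\lambda} + 1 > \ell(\lambda)$, so $\lambda_k = 0$ and $\lambda_k - k = t - \abs{\lambda}$, giving a repeated entry in $M_\lambda$ and hence a trivial class. You should also record explicitly why every choice of $\mu'_0$ in the distinct case yields a genuine partition, i.e.\ why $\lambda^{(i)}_j \geq 0$ for all $j$; this follows from $\lambda^{(i)}_j = \lambda^{(0)}_{j-1} + 1 > 0$ for $1 < j \leq i$, from $\lambda^{(i)}_1 = t - \abs{\lambda^{(0)}} + 1 > \lambda^{(0)}_1 \geq 0$, and from $\lambda^{(i)}_j = \lambda^{(0)}_j$ for $j > i$, but it deserves a line.
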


We now consider Deligne's category $\underline{Rep}(S_T)$, where $T$ is a formal variable (c.f. \cite[Section 3.2]{CO}). This category is $\bC((T-t))$-linear, but otherwise it is very similar to Deligne's category $\underline{Rep}(S_{t})$ for generic $t$. For instance, as a $\bC((T-t))$-linear Karoubian tensor category, $\underline{Rep}(S_T)$ is generated by one object, again denoted by $\fh$.

One can show that $\underline{Rep}(S_T)$ is \InnaA{split} semisimple and thus abelian, and its simple objects are parametrized by Young diagrams of arbitrary size.

In \cite[Section 3.2]{CO}, Comes and Ostrik defined a map
 \begin{align*}
lift_{t}: \{\substack{\text{objects in } \underline{Rep}(S_{t}) \\ \text{up to isomorphism}}\}\rightarrow \{\substack{\text{objects in } \underline{Rep}(S_{T}) \\ \text{up to isomorphism}}\}
 \end{align*}

We will not give the precise definition of this map, but will list some of its useful properties. It is defined to be additive (i.e. $lift_{t}(A \oplus B) \cong lift_{t}(A) \oplus lift_{t}(B)$ for any $A, B \in \underline{Rep}(S_{t})$) and satisfies $lift_{t}(\fh)\cong \fh$. Moreover, we have:

\begin{proposition}\label{prop:lift_properties}
Let $A, B$ be two objects in $\underline{Rep}(S_{t})$.
 \begin{enumerate}
 \item $lift_{t}(A \otimes B) \cong lift_{t}(A) \otimes lift_{t}(B)$.
  \item $\dim_{\bC} \Hom_{\underline{Rep}(S_{t})} (A, B) = \dim_{Frak(\bC[[T]])} \Hom_{\underline{Rep}(S_{T})} (lift_{t}(A), lift_{t}(B))$.
  \item The map $lift_{t}$ is injective.
 \end{enumerate}
 \end{proposition}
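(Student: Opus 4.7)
The natural strategy is to work with an integral form of Deligne's category over the complete discrete valuation ring $R := \bC[[T-t]]$. Following \cite{CO}, there is an $R$-linear Karoubian symmetric monoidal category $\mathcal{C}_R$ generated by a single object $\fh$, in which morphism spaces $\Hom_{\mathcal{C}_R}(\fh^{\otimes n}, \fh^{\otimes m})$ are finite free $R$-modules with basis given by partition diagrams, composition being polynomial in $T$. The specialization $\mathcal{C}_R \otimes_R \bC$ (sending $T \mapsto t$) recovers $\underline{Rep}(S_{t})$, while base change $\mathcal{C}_R \otimes_R K$, $K := \bC((T-t))$, recovers $\underline{Rep}(S_T)$. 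Every object of $\underline{Rep}(S_{t})$ takes the form $(\fh^{\otimes n}, e)$ for some $n$ and idempotent $e \in E_n := \End_{\underline{Rep}(S_{t})}(\fh^{\otimes n})$, and since $E_n = E_n^R/(T-t)$ for $E_n^R := \End_{\mathcal{C}_R}(\fh^{\otimes n})$ a finitely generated $R$-algebra with $R$ complete local, Hensel-style idempotent lifting produces an idempotent $\tilde{e} \in E_n^R$ reducing to $e$, unique up to conjugation by elements congruent to $1$ modulo $T-t$. One then sets $lift_{t}((\fh^{\otimes n}, e)) := (\fh^{\otimes n}, \tilde{e} \otimes_R K)$, well-defined up to isomorphism.

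For part (1), observe that $(\fh^{\otimes n}, e) \otimes (\fh^{\otimes m}, f) = (\fh^{\otimes n+m}, e \otimes f)$, and $\tilde{e} \otimes \tilde{f} \in E_{n+m}^R$ is an idempotent lifting $e \otimes f$; uniqueness of the lift up to conjugation gives $lift_{t}(A \otimes B) \cong lift_{t}(A) \otimes lift_{t}(B)$. For part (2), I would write $\Hom_{\underline{Rep}(S_{t})}(A, B)$ as the image of the $\bC$-linear idempotent $L : g \mapsto f \circ g \circ e$ acting on the finite-dimensional $\bC$-space $\Hom_{\underline{Rep}(S_{t})}(\fh^{\otimes n}, \fh^{\otimes m}) = H_{n,m}^R/(T-t)$, where $H_{n,m}^R := \Hom_{\mathcal{C}_R}(\fh^{\otimes n}, \fh^{\otimes m})$ is free of finite rank over $R$. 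The corresponding $R$-linear operator $\tilde{L} : g \mapsto \tilde{f} \circ g \circ \tilde{e}$ on $H_{n,m}^R$ is then itself idempotent, so its image is a direct summand, hence finite free over $R$. Its $R$-rank equals both $\dim_{\bC} \mathrm{image}(L) = \dim_{\bC} \Hom_{\underline{Rep}(S_{t})}(A, B)$ (reducing modulo $T-t$) and $\dim_K \Hom_{\underline{Rep}(S_T)}(lift_{t}(A), lift_{t}(B))$ (inverting $T-t$), giving the claimed equality.

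For part (3), both $\underline{Rep}(S_{t})$ and $\underline{Rep}(S_T)$ are Krull--Schmidt Karoubian categories whose indecomposables are parametrized (up to isomorphism) by arbitrary Young diagrams, so by additivity of $lift_{t}$ it suffices to verify that the lift implements the identity on these parametrizations. Concretely, if $e$ is a primitive idempotent in $\End_{\underline{Rep}(S_{t})}(\fh^{\otimes n})$ cutting out a copy of $X_\lambda$, then its lift $\tilde{e}$ remains an idempotent in $E_n^R$, and one checks that $\tilde{e} \otimes_R K$ is primitive in the semisimple category $\underline{Rep}(S_T)$ and cuts out the indecomposable object labelled by the same $\lambda$. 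The main technical point --- and the principal obstacle --- is ensuring that primitivity of $e$ is preserved under the lift; I would handle this by tracking the explicit combinatorial form of the primitive idempotents in the partition-diagram basis of $E_n^R$, as developed in \cite[Section 3.2]{CO}, where the parametrization of indecomposables by Young diagrams is shown to be compatible on both sides. Once this matching is in hand, injectivity of $lift_{t}$ follows from uniqueness of Krull--Schmidt decomposition in the target category.
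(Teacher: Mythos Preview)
The paper itself gives no argument; it simply refers to \cite[Proposition 3.12]{CO}. Your treatment of parts (1) and (2) is correct and is in the spirit of that reference: work in the $R$-linear integral form with $R=\bC[[T-t]]$, lift idempotents, and for (2) observe that the image of the idempotent $g\mapsto \tilde f\circ g\circ \tilde e$ on the finite free $R$-module $\Hom_{\mathcal C_R}(\fh^{\otimes n},\fh^{\otimes m})$ is a free direct summand whose $R$-rank can be computed equally well after reducing modulo $T-t$ or after inverting it.

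Part (3), however, rests on a false premise. You propose to show that $lift_t$ ``implements the identity'' on the Young-diagram parametrization of indecomposables, i.e.\ that $lift_t(X_\lambda)\cong X_\lambda$, or equivalently that a primitive idempotent $e\in E_n$ lifts to an idempotent $\tilde e$ that remains primitive over $K$. For $t\in\bZ_+$ this is simply false: by Lemma~\ref{lem:nu_classes_lift} (that is, \cite[Lemma 5.20]{CO}), whenever $\lambda=\lambda^{(i)}$ with $i\ge 1$ in a non-trivial $\stackrel{t}{\sim}$-class one has
\[
lift_t\bigl(X_{\lambda^{(i)}}\bigr)\;\cong\;X_{\lambda^{(i)}}\oplus X_{\lambda^{(i-1)}},
\]
which is decomposable in $\underline{Rep}(S_T)$. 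So no amount of tracking the combinatorics of partition-diagram idempotents will establish primitivity of $\tilde e\otimes_R K$; the route you outline collapses exactly at the step you yourself flagged as the principal obstacle. Put differently, the partition algebra $E_n^R$ is an $R$-order in which distinct indecomposable projectives can become isomorphic after base change to $K$, so a primitive idempotent need not stay primitive over $K$.

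Injectivity does hold, but the argument must allow $lift_t(X_\lambda)$ to split. One clean route, once Lemma~\ref{lem:nu_classes_lift} is available, is to note that the induced map on split Grothendieck groups is unitriangular in the basis $\{[X_\lambda]\}$ and hence injective; Krull--Schmidt on both sides then finishes. Since \cite[Proposition 3.12]{CO} is established before that lemma, you should consult their argument for a direct proof from the integral form rather than attempt to force $lift_t(X_\lambda)\cong X_\lambda$.
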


 \begin{proof}
C.f. \cite[Proposition 3.12]{CO}.
 \end{proof}

\begin{remark}
It was proved both in \cite[Section 7.2]{D} and in \cite[Proposition 3.28]{CO} that the dimensions of the indecomposable objects $X_{\lambda}$ in $\underline{Rep}(S_{T})$ are polynomials in $T$ whose coefficients depend on $\lambda$ (given $\lambda$, this polynomial can be written down explicitly). Such polynomials are denoted by $P_{\lambda}(T)$.

Furthermore, it was proved in \cite[Proposition 5.12]{CO} that given $d \in \bZ_+$ and a Young diagram $\lambda$, $\lambda$ belongs to a trivial $\stackrel{d}{\sim}$-class iff $P_{\lambda}(d)=0$.
\end{remark}

The following result is proved in \cite[Lemma 5.20]{CO}:

\begin{lemma}[Comes, Ostrik]\label{lem:nu_classes_lift}
Consider the $\stackrel{t}{\sim}$-equivalence relation on Young diagrams.

\begin{itemize}
 \item Whenever $\lambda$ lies in a trivial $\stackrel{t}{\sim}$-class, $lift_{t}(X_{\lambda}) = X_{\lambda}$.
\item For a non-trivial $\stackrel{t}{\sim}$-class $\{ \lambda^{(i)} \}_i$,
\begin{align*}
& lift_{t}(X_{\lambda^{(0)}}) = X_{\lambda^{(0)}},
& lift_{t}(X_{\lambda^{(i)}}) = X_{\lambda^{(i)}} \oplus X_{\lambda^{(i-1)}} \; \, \forall i \geq 1
\end{align*}
\end{itemize}
\end{lemma}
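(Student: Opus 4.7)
The plan is to exploit the three properties of $lift_t$ recorded in Proposition \ref{prop:lift_properties}. Since $lift_t(\fh) \cong \fh$ and $lift_t$ preserves tensor products and direct sums, one has $lift_t(\fh^{\otimes n}) \cong \fh^{\otimes n}$ in $\underline{Rep}(S_T)$ for every $n$. Because $\underline{Rep}(S_T)$ is split semisimple with simples $X_\mu$ indexed by arbitrary Young diagrams, I may write $lift_t(X_\lambda) = \bigoplus_\mu m^\lambda_\mu X_\mu$ for non-negative integers $m^\lambda_\mu$, and the task is to pin these down.

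First I would dispose of the trivial-class case. If $\lambda$ lies in a singleton $\stackrel{t}{\sim}$-class, then $X_\lambda$ sits in a block of $\underline{Rep}(S_t)$ by itself (compare with the block description in \cite{CO}), so $\End_{\underline{Rep}(S_t)}(X_\lambda) \cong \bC$. Proposition \ref{prop:lift_properties}(2) then gives $\dim \End_{\underline{Rep}(S_T)}(lift_t(X_\lambda)) = \sum_\mu (m^\lambda_\mu)^2 = 1$, forcing $lift_t(X_\lambda) \cong X_\nu$ for a unique Young diagram $\nu$. To identify $\nu = \lambda$, I would realize $X_\lambda$ as the image of a primitive idempotent $e_\lambda$ in $\End(\fh^{\otimes N})$ for some $N \geq \abs{\lambda}$ and check that $e_\lambda$ remains primitive over $\bC((T-t))$ cutting out $X_\lambda$ there; this is consistent with $P_\lambda(t) \neq 0$ (trivial class), so no other primitive idempotent can merge with $e_\lambda$ upon specialization.

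The same endomorphism-dimension argument handles the base case $\lambda^{(0)}$ of a non-trivial class, since $\lambda^{(0)}$ has no predecessor in its class to glue with. For $i \geq 1$, I would proceed by induction on $i$ by comparing decompositions of $\fh \otimes X_{\lambda^{(i-1)}}$ in both categories. By Proposition \ref{prop:lift_properties}(1) and the inductive hypothesis, $lift_t(\fh \otimes X_{\lambda^{(i-1)}}) \cong \fh \otimes (X_{\lambda^{(i-1)}} \oplus X_{\lambda^{(i-2)}})$ (with the second summand dropped when $i = 1$), and the right-hand side decomposes into simples via the known branching rule for tensoring with $\fh$ in the semisimple category $\underline{Rep}(S_T)$. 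On the $\underline{Rep}(S_t)$ side, $\fh \otimes X_{\lambda^{(i-1)}}$ decomposes into indecomposables including $X_{\lambda^{(i)}}$ and indecomposables $X_\mu$ with $\abs{\mu} \leq \abs{\lambda^{(i-1)}} + 1$ for which the lift has already been determined; subtracting these known contributions isolates $lift_t(X_{\lambda^{(i)}})$.

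The main obstacle is the bookkeeping in the inductive step: one must verify that the two branching rules line up so that, after subtracting already-known summands, the residue equals exactly $X_{\lambda^{(i)}} \oplus X_{\lambda^{(i-1)}}$ rather than something larger. This is a combinatorial task controlled by Lemma \ref{lem:nu_classes_struct}, specifically the strip-by-strip description of consecutive members of a non-trivial $\stackrel{t}{\sim}$-class, which dictates precisely how $\lambda^{(i-1)}$ and $\lambda^{(i)}$ can co-occur inside $\fh \otimes X_{\lambda^{(j)}}$. A cleaner alternative, which I would pursue if the direct comparison becomes unwieldy, is to count multiplicities of $X_{\lambda^{(i)}}$ in $\fh^{\otimes n}$ on both sides (using that these multiplicities are governed by the partition algebra combinatorics) and use injectivity of $lift_t$ to conclude.
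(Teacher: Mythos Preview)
The paper does not give its own proof of this lemma: it is stated with the attribution ``The following result is proved in \cite[Lemma 5.20]{CO}'' and no argument is supplied. So there is nothing in the present paper to compare your proposal against; any comparison has to be with the Comes--Ostrik proof.

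That said, your primary inductive strategy has a genuine gap. You want to reach $X_{\lambda^{(i)}}$ by tensoring $X_{\lambda^{(i-1)}}$ once with $\fh$, but by Lemma~\ref{lem:nu_classes_struct} the step $\lambda^{(i-1)} \subset \lambda^{(i)}$ adds a horizontal strip of length $\lambda^{(i-1)}_{i-1}-\lambda^{(i-1)}_i+1$ (and $t-\abs{\lambda}-\lambda_1+1$ when $i=1$), which can be arbitrarily large. Hence $X_{\lambda^{(i)}}$ need not appear as a direct summand of $\fh\otimes X_{\lambda^{(i-1)}}$ at all, and your subtraction scheme never isolates it. There is also a circularity risk: in the logical order of both this paper and \cite{CO}, the endomorphism dimensions and block description you invoke (Theorem~\ref{thrm:blocks_S_nu} here) are \emph{deduced} from the present lemma, so you cannot appeal to them to prove it.

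Your ``cleaner alternative''---computing the multiplicities $[\fh^{\otimes n}:X_\lambda]$ on both sides via partition-algebra combinatorics and then matching them up using Proposition~\ref{prop:lift_properties}---is in fact essentially what Comes and Ostrik do. They establish that these multiplicities are independent of $t$ (governed by oscillating-tableau counts in the partition algebra), which together with the Hom-dimension preservation of $lift_t$ forces the stated decomposition. If you want a self-contained proof, that is the route to take; the tensoring-by-$\fh$ induction will not close.
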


Based on Lemmas \ref{lem:nu_classes_struct}, \ref{lem:nu_classes_lift}, Comes and Ostrik prove the following theorem (c.f. \cite[Theorem 5.3, Proposition 5.22, Theorems 6.4, 6.10]{CO}, \cite[Proposition 2.7]{CO2}):

\begin{theorem}\label{thrm:blocks_S_nu}
 The indecomposable objects $X_{\lambda}, X_{\lambda'}$ belong to the same block of $\underline{Rep}(S_{t})$ iff $\lambda \stackrel{t}{\sim} \lambda'$. The structure of the blocks of $\underline{Rep}(S_{t})$ is described below:
 \begin{itemize}[leftmargin=*]
  \item For a trivial $\stackrel{t}{\sim}$-class $\{ \lambda \}$, the object $X_{\lambda}$ satisfies:
  $$\dim \End_{\underline{Rep}(S_{t})} (X_{\lambda}) =1$$
  and the block of $\underline{Rep}(S_{t})$ corresponding to $\{\lambda\}$ is equivalent to the category $Vect_{\bC}$ of finite dimensional complex vector spaces (in particular, it is a semisimple abelian category, so we will call these blocks {\it semisimple}).
  \item Let $ \{\lambda^{(i)}\}_i$ be a non-trivial $\stackrel{t}{\sim}$-class, and let $i \geq 1, j \geq 0$. Then the block corresponding to $ \{\lambda^{(i)}\}_i$ is not an abelian category (in particular, not semisimple), and the objects $X_{\lambda^{(i)}}$ satisfy:
\begin{align*}
&\dim \Hom_{\underline{Rep}(S_{t})} \left( X_{\lambda^{(j)}}, X_{\lambda^{(i)}} \right) = 0 \text{ if } \InnaA{\abs{j -i} \geq 2}\\
&\dim \Hom_{\underline{Rep}(S_{t})} \left( X_{\lambda^{(j)}}, X_{\lambda^{(i)}} \right) = 1 \text{ if } \InnaA{\abs{j -i} =1}\\
&\dim \End_{\underline{Rep}(S_{t})} \left( X_{\lambda^{(i)}} \right) = 2 \text{ for } i\geq 1 \\
& \dim \End_{\underline{Rep}(S_{t})} \left( X_{\lambda^{(0)}} \right) = 1
\end{align*}

This block has the following associated quiver:
$$ X_{\lambda^{(0)}} \substack{ \alpha_0 \\ \leftrightarrows \\ \beta_0} X_{\lambda^{(1)}}  \substack{ \alpha_1 \\ \leftrightarrows \\ \beta_1} X_{\lambda^{(2)}}  \substack{ \alpha_2 \\ \leftrightarrows \\ \beta_2} ...$$
with relations \InnaA{$ \alpha_0 \circ \beta_0 =0$}, $\beta_i \circ \beta_{i-1} =0$, \InnaA{$\alpha_i \circ \alpha_{i-1} = 0$, $\beta_i \circ \alpha_i = \alpha_{i+1} \circ \beta_{i+1}$ for $i \geq 0$}.
 \end{itemize}

\end{theorem}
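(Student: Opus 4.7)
The plan is to derive every part of the statement from the semisimplicity of $\underline{Rep}(S_{T})$, Lemma \ref{lem:nu_classes_lift}, and the preservation of Hom-dimensions under $lift_{t}$ (Proposition \ref{prop:lift_properties}(2)).

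First, we compute the dimensions of all $\Hom$-spaces between indecomposables. For any Young diagrams $\lambda, \lambda'$, Proposition \ref{prop:lift_properties}(2) gives
\[
\dim_{\bC} \Hom_{\underline{Rep}(S_{t})}(X_{\lambda}, X_{\lambda'}) = \dim \Hom_{\underline{Rep}(S_{T})}(lift_{t}(X_{\lambda}), lift_{t}(X_{\lambda'})).
\]
Since $\underline{Rep}(S_{T})$ is split semisimple with the $X_{\mu}$ as pairwise non-isomorphic simples, the right-hand side is the number of isomorphic simple summands (with multiplicities) shared by the two lifts. Plugging in the decompositions of Lemma \ref{lem:nu_classes_lift} yields three regimes: (i) if $\lambda$ and $\lambda'$ belong to different $\stackrel{t}{\sim}$-classes, the lifts share no common summand and $\Hom = 0$; (ii) if $\lambda$ lies in a trivial class, then $lift_{t}(X_{\lambda}) = X_{\lambda}$ is simple and $\End(X_{\lambda})= \bC$; (iii) for indices $i,j$ in the same non-trivial class $\{\lambda^{(k)}\}_{k \geq 0}$, comparison of the index sets $\{i, i-1\} \cap \{j, j-1\}$ (omitting $\lambda^{(-1)}$) gives precisely $0$, $1$, or $2$ according to $\abs{i-j} \geq 2$, $\abs{i-j}=1$, or $i=j \geq 1$.

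The block decomposition now follows. Case (i) shows that no morphism connects indecomposables in distinct $\stackrel{t}{\sim}$-classes, while (ii) and (iii) show that within a class every pair of consecutive indecomposables is linked by a nonzero morphism; this proves the first sentence of the theorem. For a trivial class the block contains a single indecomposable with endomorphism ring $\bC$, so is equivalent to $Vect_{\bC}$. For a non-trivial class, fix for each $i$ nonzero morphisms $\alpha_{i}, \beta_{i}$ spanning the two one-dimensional Hom-spaces between $X_{\lambda^{(i)}}$ and $X_{\lambda^{(i+1)}}$; this provides the vertices and arrows of the claimed quiver. The relations $\beta_{i} \circ \beta_{i-1} = 0$ and $\alpha_{i} \circ \alpha_{i-1} = 0$ are then automatic, since their targets are Hom-spaces shown in (iii) to vanish. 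The relation $\alpha_{0} \circ \beta_{0} = 0$ comes from indecomposability of $X_{\lambda^{(0)}}$: the composition lies in the one-dimensional $\End(X_{\lambda^{(0)}}) = \bC \cdot \id$, and if it were a nonzero scalar multiple of the identity then $\alpha_{0}, \beta_{0}$ would split $X_{\lambda^{(0)}}$ off of $X_{\lambda^{(1)}}$, contradicting indecomposability.

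The main obstacle will be the commutation relation $\beta_{i} \circ \alpha_{i} = \alpha_{i+1} \circ \beta_{i+1}$, which describes how the two-dimensional local endomorphism algebras glue along the chain. The plan is to identify a canonical nilpotent generator of the radical of each $\End(X_{\lambda^{(i)}})$ by tracking it through the lift: under $lift_{t}$ the algebra $\End(X_{\lambda^{(i)}})$ embeds into $\End(X_{\lambda^{(i)}} \oplus X_{\lambda^{(i-1)}})$, and the off-diagonal components single out a preferred nilpotent element. One then shows that both sides of the desired relation project to this distinguished element, and rescales the generators $\alpha_{i}, \beta_{i}$ consistently along the chain so that the equality holds on the nose. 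This normalization is the delicate step; it is carried out in \cite[Theorems 6.4, 6.10]{CO} and \cite[Proposition 2.7]{CO2}, and we would follow their argument in the Deligne-category setting recalled in Subsection \ref{ssec:nu_class_and_lift}.
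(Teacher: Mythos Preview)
Your approach is correct and is essentially the one taken in the Comes--Ostrik papers that the present paper cites; note, however, that this paper does not give its own proof of Theorem~\ref{thrm:blocks_S_nu}---it is stated as a known result with references to \cite[Theorem 5.3, Proposition 5.22, Theorems 6.4, 6.10]{CO} and \cite[Proposition 2.7]{CO2}. Your derivation of all the $\Hom$-dimensions from Proposition~\ref{prop:lift_properties}(2) and Lemma~\ref{lem:nu_classes_lift}, your argument for the block decomposition, and your verification of the relations $\alpha_{0}\beta_{0}=0$, $\beta_{i}\beta_{i-1}=0$, $\alpha_{i}\alpha_{i-1}=0$ are all sound and match what Comes and Ostrik do; your decision to defer the normalization needed for $\beta_{i}\alpha_{i}=\alpha_{i+1}\beta_{i+1}$ to \cite{CO, CO2} is exactly what the paper itself does.
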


\subsection{Semisimple subcategory}
 \begin{lemma}\label{lem:ss_subcat_Del}
  Let $N \in \bZ_+$. The full Karoubian subcategory $\underline{Rep}(S_t)^{(N)}$ of $\underline{Rep}(S_t)$ generated by the objects $\fh^{\otimes r}$, $r \in \{0, ..., N\}$ is semisimple when $t \notin \{0, ..., 2N-2\}$.
 \end{lemma}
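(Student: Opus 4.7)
The plan is to describe the indecomposable objects of $\underline{Rep}(S_t)^{(N)}$ explicitly, then verify semisimplicity by checking that each such object has endomorphism algebra $\bC$ and that any two non-isomorphic ones have zero $\Hom$. From the standard fact (in the setup of \cite{CO, D}) that $X_\lambda$ is a direct summand of $\fh^{\otimes r}$ iff $|\lambda| \leq r$, the indecomposable objects of $\underline{Rep}(S_t)^{(N)}$, up to isomorphism, are precisely the $X_\lambda$ with $|\lambda| \leq N$.

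For $t \notin \bZ_+$, Lemma \ref{lem:nu_classes_struct}(1) gives that every $\stackrel{t}{\sim}$-class is trivial, so by Theorem \ref{thrm:blocks_S_nu} the entire category $\underline{Rep}(S_t)$ is semisimple abelian, and hence so is its full Karoubian subcategory $\underline{Rep}(S_t)^{(N)}$.

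The main case is therefore $t = n \in \bZ_+$ with $n \geq 2N - 1$. By Theorem \ref{thrm:blocks_S_nu}, $\Hom$ vanishes between indecomposables in distinct blocks, so I analyse each block separately; trivial classes are automatic. For a non-trivial class $\{\lambda^{(i)}\}_{i \geq 0}$, the sizes $|\lambda^{(i)}|$ are strictly increasing (by Lemma \ref{lem:nu_classes_struct}(2), since each $\lambda^{(i+1)} \setminus \lambda^{(i)}$ is a non-empty strip), so it suffices to establish the combinatorial claim: if $|\lambda^{(0)}| \leq N$, then $|\lambda^{(1)}| > N$. By Lemma \ref{lem:nu_classes_struct}(2), $|\lambda^{(1)}| = n + 1 - \lambda^{(0)}_1$, so the claim reduces to $\lambda^{(0)}_1 \leq n - N$. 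This follows from a short case analysis using the two bounds $\lambda^{(0)}_1 \leq |\lambda^{(0)}|$ and $\lambda^{(0)}_1 \leq n - |\lambda^{(0)}|$ (the latter because $\widetilde{\lambda^{(0)}}(n)$ must be a valid Young diagram): if $|\lambda^{(0)}| = N$, the second bound gives $\lambda^{(0)}_1 \leq n - N$; if $|\lambda^{(0)}| < N$, the first bound together with $n \geq 2N - 1$ gives $\lambda^{(0)}_1 \leq N - 1 \leq n - N$.

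With this in hand, $X_{\lambda^{(0)}}$ is the unique member of the class lying in $\underline{Rep}(S_t)^{(N)}$, and $\End(X_{\lambda^{(0)}}) = \bC$ by Theorem \ref{thrm:blocks_S_nu}, completing the verification of semisimplicity. The main obstacle is the combinatorial bound $\lambda^{(0)}_1 \leq n - N$; its sharpness is witnessed by the borderline case $n = 2N - 2$, $\lambda^{(0)} = (N-1)$, for which $|\lambda^{(1)}| = N$, so that both $X_{\lambda^{(0)}}$ and $X_{\lambda^{(1)}}$ land in $\underline{Rep}(S_{2N-2})^{(N)}$, producing a non-semisimple block and explaining the exclusion of the entire range $\{0, \ldots, 2N - 2\}$ in the statement.
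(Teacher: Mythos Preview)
Your proof is correct. The paper's own proof is a one-liner citing Lemma \ref{lem:nu_classes_lift} (the lifting lemma), whereas you invoke the block description in Theorem \ref{thrm:blocks_S_nu}; since the block theorem is itself derived from the lifting lemma, the two routes are essentially the same, and both rest on the identical combinatorial observation that for $t=n\geq 2N-1$ every $X_\lambda$ with $|\lambda|\leq N$ is either in a trivial $\stackrel{n}{\sim}$-class or is the minimal element $\lambda^{(0)}$ of its class. Your write-up has the virtue of making that combinatorics explicit (including the sharpness example at $n=2N-2$), while the paper leaves it implicit; conversely, the lifting formulation is slightly cleaner in that one concludes semisimplicity in one stroke from $\dim\Hom$-preservation under $lift_t$ together with the semisimplicity of $\underline{Rep}(S_T)$, without needing to read off the individual $\Hom$-dimensions from the block theorem.
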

 \begin{proof}
 Direct consequence of Lemma \ref{lem:nu_classes_lift} (c.f. also \cite[Proposition 5.1]{D}).
 \end{proof}
 \begin{remark}
  Note that the simple objects in this category are exactly the indecomposable objects $X_{\lambda}$ for which $\abs{\lambda} \leq N$.
 \end{remark}
\subsection{Abelian envelope}\label{ssec:S_nu_abelian_env}

As it was mentioned before, the category $\underline{Rep}(S_{t})$ is defined as a Karoubian category. For $t \notin \bZ_+$, it is semisimple and thus abelian, but for $t \in \bZ_+$, it is not abelian. Fortunately, it has been shown that $\underline{Rep}(S_{t})$ possesses an ``abelian envelope'', that is, that it can be embedded into an abelian tensor category, and this abelian tensor category has a universal mapping property (c.f. \cite[Conjecture 8.21.2]{D}, and \cite[Theorem 1.2]{CO2}).

An explicit construction of the category $\underline{Rep}^{ab}(S_{t=n})$ is given in \cite{CO2}. We will only list the results which will be used in this paper.

\begin{proposition}\label{prop:ab_envelope_highest_weight}
 The category $\underline{Rep}^{ab}(S_{t})$ is a highest weight category corresponding to the (infinite) partially ordered set $(\{ \text{Young diagrams} \}, \geq )$, where
 $$\lambda \geq \mu \text{ iff } \lambda \stackrel{t}{\sim} \mu, \lambda \subset \mu$$
 (namely, $\lambda^{(i)} \geq \lambda^{(j)}$ if $i \leq j$).

\end{proposition}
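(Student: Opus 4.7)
The plan is to reduce the statement to a block-by-block verification using the explicit construction of $\underline{Rep}^{ab}(S_t)$ given in \cite{CO2}. By Theorem \ref{thrm:blocks_S_nu}, the indecomposables of $\underline{Rep}(S_t)$ decompose into blocks indexed by $\stackrel{t}{\sim}$-classes, and this block decomposition is inherited by the abelian envelope. Since the poset relation in the statement only relates Young diagrams lying in a common $\stackrel{t}{\sim}$-class, it suffices to check the highest weight axioms one block at a time; along the way one also has to verify that the poset is interval-finite from above, which is immediate from Lemma \ref{lem:nu_classes_struct} since $\{\mu : \mu \geq \lambda^{(i)}\} = \{\lambda^{(0)}, \ldots, \lambda^{(i)}\}$, and hence the standard notion of highest weight category applies.

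For a trivial class $\{\lambda\}$, Theorem \ref{thrm:blocks_S_nu} shows that the corresponding block is equivalent to $Vect_{\bC}$, so the axioms hold trivially with $L(\lambda) = \Delta(\lambda) = \nabla(\lambda) = P(\lambda)$. For a non-trivial class $\{\lambda^{(i)}\}_{i \geq 0}$ I would use the description in \cite{CO2} of the block of $\underline{Rep}^{ab}(S_t)$ as (finite-dimensional, locally nilpotent) modules over the path algebra of the quiver in Theorem \ref{thrm:blocks_S_nu}, modulo the listed relations. In this picture I expect $\Delta(\lambda^{(i)})$ to be the uniserial module of length two with head $L(\lambda^{(i)})$ and socle $L(\lambda^{(i+1)})$, the indecomposable projective $P(\lambda^{(i)})$ for $i \geq 1$ to admit a two-step $\Delta$-filtration with $\Delta(\lambda^{(i)})$ on top and $\Delta(\lambda^{(i-1)})$ at the bottom, and $P(\lambda^{(0)}) = \Delta(\lambda^{(0)})$. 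Under $lift_t$, these projectives should coincide with the indecomposables $X_{\lambda^{(i)}}$, in agreement with the endomorphism dimensions recorded in Theorem \ref{thrm:blocks_S_nu}.

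The main obstacle will be the bookkeeping: one has to choose the orientation of the arrows $\alpha_i$, $\beta_i$ so that the predicted Loewy structures of $\Delta(\lambda^{(i)})$, $\nabla(\lambda^{(i)})$ and $P(\lambda^{(i)})$ are actually compatible with the relations $\alpha_0 \circ \beta_0 = 0$, $\alpha_i \circ \alpha_{i-1} = 0$, $\beta_i \circ \beta_{i-1} = 0$, $\beta_i \circ \alpha_i = \alpha_{i+1} \circ \beta_{i+1}$, and check the required $\Ext^1$-vanishing between standard and costandard objects of distinct highest weights. Once this orientation is fixed, the remaining axioms become a routine inspection of the explicit Loewy pictures inside each block.
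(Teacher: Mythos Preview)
The paper does not prove this proposition at all: it is stated as one of the structural facts imported from \cite{CO2} (the preceding sentence reads ``We will only list the results which will be used in this paper''), and no proof environment follows it. So there is nothing in the paper to compare your argument against; your proposal is an attempt to supply a proof where the author simply quotes Comes--Ostrik.

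That said, your outline is essentially the route taken in \cite{CO2}: reduce to blocks, handle semisimple blocks trivially, and for a non-trivial block use the explicit quiver-with-relations description to exhibit standard objects and $\Delta$-filtrations of projectives. One small slip: you write that ``these projectives should coincide with the indecomposables $X_{\lambda^{(i)}}$,'' but by Proposition~\ref{prop:obj_ab_env} the object $X_{\lambda^{(0)}}$ is simple and \emph{not} projective in $\underline{Rep}^{ab}(S_t)$; it is the tilting object $T(\lambda^{(0)}) = L(\lambda^{(0)})$, while $P(\lambda^{(0)}) = \Delta(\lambda^{(0)})$ is a length-two object not lying in $\underline{Rep}(S_t)$. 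For $i \geq 1$ your identification $P(\lambda^{(i)}) = X_{\lambda^{(i)}}$ is correct. This does not affect the highest-weight verification, but it is worth getting straight since the paper later uses precisely the fact that $X_{\lambda^{(0)}}$ is simple non-projective (in the proof of Proposition~\ref{prop:alt_sum_reduced_Kron}).
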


The category $\underline{Rep}(S_t)$ is the subcategory of tilting objects in $\Dab$. More specifically, we have:
\begin{proposition}\label{prop:obj_ab_env}
The blocks of the abelian category $\Dab$ correspond to the $\stackrel{t}{\sim}$-classes:
\begin{enumerate}
   \item Let $\lambda$ lie in a trivial $\stackrel{t}{\sim}$-class. The corresponding block of $\Dab$ is semisimple, and is generated by the simple projective object $X_{\lambda}$.
   \item Let $ \{\lambda^{(i)}\}_{\InnaA{i \geq 0}}$ be a non-trivial $\stackrel{t}{\sim}$-class. The corresponding block of $\Dab$ is non-semisimple, and contains $X_{\lambda^{(i)}}$ for $i \geq 0$.

 \begin{itemize}[leftmargin=*]
 \item $X_{\lambda^{(0)}}$ is a simple, non-projective object.
  \item For any $i \geq 1$, $X_{\lambda^{(i)}}$ is a projective object.
%
 \end{itemize}
\end{enumerate}

\end{proposition}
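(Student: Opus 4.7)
The plan is to derive everything from the highest weight structure given by Proposition \ref{prop:ab_envelope_highest_weight}, together with the identification of each $X_\lambda$ with the indecomposable tilting object $T(\lambda)$ in $\Dab$ (coming from the remark, just before the proposition, that $\underline{Rep}(S_t)$ is the full subcategory of tilting objects).

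First I would verify the block decomposition. In any highest weight category, the blocks of the abelian category correspond to the connected components of the underlying poset under the comparability relation. Unpacking the definition of $\geq$, two Young diagrams are comparable iff they lie in the same $\stackrel{t}{\sim}$-class, so the blocks of $\Dab$ are indexed by $\stackrel{t}{\sim}$-classes (matching Theorem \ref{thrm:blocks_S_nu}). For a trivial class $\{\lambda\}$, the associated block is a highest weight category over a one-point poset and is therefore semisimple, with $L(\lambda) = \Delta(\lambda) = \nabla(\lambda) = P(\lambda) = T(\lambda)$; since $X_\lambda = T(\lambda)$, this yields (1).

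For a non-trivial class $\{\lambda^{(i)}\}_{i \geq 0}$, the induced poset is the infinite chain $\lambda^{(0)} \geq \lambda^{(1)} \geq \lambda^{(2)} \geq \ldots$, with $\lambda^{(0)}$ extremal. By the highest weight axioms applied at this extremal weight, $\Delta(\lambda^{(0)}) = \nabla(\lambda^{(0)}) = L(\lambda^{(0)})$, hence $T(\lambda^{(0)}) = L(\lambda^{(0)})$ is simple, giving the first bullet of (2). Non-projectivity follows because the block is non-semisimple (Theorem \ref{thrm:blocks_S_nu}): the nonzero morphism $X_{\lambda^{(0)}} \to X_{\lambda^{(1)}}$ from the quiver description of that theorem yields a nonzero class in $\Ext^1_{\Dab}(L(\lambda^{(0)}), L(\lambda^{(1)}))$, ruling out projectivity of $L(\lambda^{(0)})$.

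The remaining and main obstacle is to show that $X_{\lambda^{(i)}} = T(\lambda^{(i)})$ is projective for $i \geq 1$; this requires pinning down the structure of the indecomposable projectives of $\Dab$. My plan is to establish the identification $T(\lambda^{(i)}) = P(\lambda^{(i-1)})$: one shows that $P(\lambda^{(i-1)})$ has a length-two $\Delta$-filtration with quotients $\Delta(\lambda^{(i-1)})$ and $\Delta(\lambda^{(i)})$, dictated by the nearest-neighbor morphism pattern of the quiver in Theorem \ref{thrm:blocks_S_nu}, and that the same object is $\nabla$-filtered, so $P(\lambda^{(i-1)})$ is tilting. Matching $\Delta$-multiplicities then forces $T(\lambda^{(i)}) = P(\lambda^{(i-1)})$, so $X_{\lambda^{(i)}}$ is projective. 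A fully rigorous argument at this step would appeal to the explicit construction of $\Dab$ in \cite{CO2}.
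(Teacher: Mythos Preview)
The paper does not supply a proof of this proposition. As the sentence preceding Proposition~\ref{prop:ab_envelope_highest_weight} announces (``An explicit construction of the category $\underline{Rep}^{ab}(S_{t=n})$ is given in \cite{CO2}. We will only list the results which will be used in this paper.''), both that proposition and this one are imported from \cite{CO2} without argument, so there is no proof in the paper to compare your attempt against.

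Your sketch is a reasonable reconstruction of how the statement would follow from the highest weight structure, and it is in the spirit of what one finds when unpacking \cite{CO2}. Two cautions. First, watch the direction of the partial order: with the paper's convention $\lambda^{(0)} \geq \lambda^{(1)} \geq \cdots$ the weight $\lambda^{(0)}$ is \emph{maximal}, and whether the maximal or the minimal extreme of a chain forces $\Delta=\nabla=L=T$ depends on which of the two standard sign conventions for highest weight categories is in use; you should check this against \cite{CO2} rather than assert it from the bare word ``extremal''. Second, you are right to flag that the identification $T(\lambda^{(i)}) \cong P(\lambda^{(i-1)})$ is where the real content lies. The quiver in Theorem~\ref{thrm:blocks_S_nu} only records $\Hom$-spaces between tilting objects in $\underline{Rep}(S_t)$ and does not by itself determine the $\Delta$-filtration multiplicities of projectives in $\Dab$; BGG reciprocity would require knowing the composition series of the costandard objects, which is additional input. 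So your concession that a rigorous argument at that step needs the explicit description in \cite{CO2} is appropriate, and matches what the paper itself does by simply citing that reference.
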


\section{Reduced Kronecker coefficients and Deligne's categories}\label{sec:red_Kron_coeff_and_Deligne_cat}

\begin{notation}
 Let $t \in \bC$, and consider Deligne's category $\underline{Rep}(S_t)$. Let $A$ be any object in $\underline{Rep}(S_t)$, and $X_{\lambda}$ be an indecomposable object. Then $A$ decomposes into a direct sum of indecomposable objects, and we denote by $[A: X_{\lambda}]_t$ the multiplicity of $X_{\lambda}$ in this direct sum, i.e.
 $$ A \cong \bigoplus_{\lambda} [A: X_{\lambda}]_t X_{\lambda}$$
 
 Similarly, given an object $A$ in the $Frak(\bC[[T]])$-linear Deligne's category $\underline{Rep}(S_T)$, and an indecomposable object $X_{\lambda}$ in the same category, we denote by $[A: X_{\lambda}]_{T}$ the multiplicity of $X_{\lambda}$ in the decomposition of $A$ into a direct sum of indecomposable objects. Since $\underline{Rep}(S_T)$ is semisimple, we have:
 $$[A: X_{\lambda}]_{T} = \dim_{Frak(\bC[[T]])} \Hom_{\underline{Rep}(S_T)}(A,  X_{\lambda})$$
\end{notation}

\begin{definition}
 Consider the $Frak(\bC[[T]])$-linear Deligne's category $\underline{Rep}(S_T)$ (this category is semisimple). Let $\lambda, \mu, \T$ be Young diagrams of arbitrary size. We denote by $\bar{g}^{\lambda}_{\mu, \tau}$ the multiplicity of the simple object $X_{\lambda}$ in $X_{\mu} \otimes X_{\tau}$:
 $$ \bar{g}^{\lambda}_{\mu, \tau} := [ X_{\mu} \otimes X_{\tau}: X_{\lambda}]_{T} = \dim_{Frak(\bC[[T]])} \Hom_{\underline{Rep}(S_T)}(X_{\mu} \otimes X_{\tau}, X_{\lambda})$$
 The value $\bar{g}^{\lambda}_{\mu, \tau}$ will be called {\it the reduced Kronecker coefficient} corresponding to the triple of Young diagrams $(\lambda, \mu, \T)$.
\end{definition}

Thus the reduced Kronecker coefficients are the structural constants of the Grothendieck rings $\mathfrak{R}(S_t)$ of Deligne's categories $\underline{Rep}(S_t)$ at generic values of $t$ ($t \notin \bZ_+$). These rings are all isomorphic to one another and do not depend on $t$:
\begin{proposition}\label{prop:groth_ring_generic_t}
 Let $t \notin \bZ_+$, and let $\lambda, \mu, \T$ be Young diagrams of arbitrary size. Consider the semisimple category $\underline{Rep}(S_t)$. Then the multiplicity of the simple object $X_{\lambda}$ in $X_{\mu} \otimes X_{\tau}$ is $\bar{g}^{\lambda}_{\mu, \tau}$.
\end{proposition}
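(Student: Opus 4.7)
The plan is to exploit the lifting map $lift_t : \underline{Rep}(S_t) \to \underline{Rep}(S_T)$ of Proposition \ref{prop:lift_properties}, which respects tensor products and preserves Hom-dimensions. Since the definition of $\bar{g}^\lambda_{\mu,\tau}$ already lives in $\underline{Rep}(S_T)$, the claim reduces to transporting tensor-decomposition data from $\underline{Rep}(S_T)$ back down to $\underline{Rep}(S_t)$, and this is exactly what the lifting map is designed to do when $t$ is generic.

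First, I would invoke Lemma \ref{lem:nu_classes_struct}(1): for $t \notin \bZ_+$, every Young diagram $\lambda$ lies in a trivial $\stackrel{t}{\sim}$-class. By Lemma \ref{lem:nu_classes_lift}, this gives $lift_t(X_\lambda) \cong X_\lambda$ in $\underline{Rep}(S_T)$ for every $\lambda$. Next, by Proposition \ref{prop:lift_properties}(1), the lifting map is tensor-compatible, so
\[
lift_t(X_\mu \otimes X_\tau) \;\cong\; lift_t(X_\mu) \otimes lift_t(X_\tau) \;\cong\; X_\mu \otimes X_\tau
\]
inside $\underline{Rep}(S_T)$.

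Next, I would use the fact that $\underline{Rep}(S_t)$ is semisimple abelian for $t \notin \bZ_+$ (Section \ref{ssec:S_nu_general}), and that each $X_\lambda$ is simple with $\End(X_\lambda) = \bC$ (trivial block, Theorem \ref{thrm:blocks_S_nu}). Therefore the multiplicity of $X_\lambda$ in $X_\mu \otimes X_\tau$ in $\underline{Rep}(S_t)$ equals $\dim_\bC \Hom_{\underline{Rep}(S_t)}(X_\mu \otimes X_\tau,\, X_\lambda)$. Applying Proposition \ref{prop:lift_properties}(2) to this Hom-space and using the identifications just established,
\[
\dim_\bC \Hom_{\underline{Rep}(S_t)}(X_\mu \otimes X_\tau, X_\lambda)
= \dim_{Frak(\bC[[T]])} \Hom_{\underline{Rep}(S_T)}(X_\mu \otimes X_\tau, X_\lambda)
= \bar{g}^\lambda_{\mu,\tau},
\]
where the last equality is the definition.

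There is no real obstacle here; the statement is a formal consequence of the three properties of $lift_t$ combined with the triviality of every $\stackrel{t}{\sim}$-class for generic $t$. The only subtlety worth flagging explicitly is to verify that $[X_\mu \otimes X_\tau : X_\lambda]_t$ coincides with $\dim \Hom(X_\mu \otimes X_\tau, X_\lambda)$, which requires semisimplicity of $\underline{Rep}(S_t)$ together with $\End(X_\lambda) = \bC$; both are in place for $t \notin \bZ_+$.
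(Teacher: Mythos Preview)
Your proof is correct and follows essentially the same approach as the paper's own proof: invoke Lemma \ref{lem:nu_classes_lift} to get $lift_t(X_\lambda)\cong X_\lambda$ for all $\lambda$ (hence $lift_t(X_\mu\otimes X_\tau)\cong X_\mu\otimes X_\tau$), then use the Hom-dimension preservation of $lift_t$ together with semisimplicity of $\underline{Rep}(S_t)$ to identify the multiplicity with $\bar g^\lambda_{\mu,\tau}$. Your version is slightly more explicit in citing Lemma \ref{lem:nu_classes_struct}(1), Proposition \ref{prop:lift_properties}(1)--(2), and in justifying why multiplicity equals $\dim\Hom$, but the logical content is identical.
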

\begin{proof}
 By Lemma \ref{lem:nu_classes_lift}, $lift_t(X_{\lambda}) \cong X_{\lambda}$ for any Young diagram $\lambda$, and so $lift_t(X_{\mu} \otimes X_{\tau}) \cong X_{\mu} \otimes X_{\tau}$. 
 Thus we get:
 \begin{align*}
 &\bar{g}^{\lambda}_{\mu, \tau} := [ X_{\mu} \otimes X_{\tau}: X_{\lambda}]_{T} = \dim_{Frak(\bC[[T]])} \Hom_{\underline{Rep}(S_T)}(X_{\mu} \otimes X_{\tau}, X_{\lambda}) = \\
&= \dim_{\bC} \Hom_{\underline{Rep}(S_t)}(X_{\mu} \otimes X_{\tau}, X_{\lambda}) = [ X_{\mu} \otimes X_{\tau}: X_{\lambda}]_{t}.
 \end{align*}

\end{proof}

In fact, for a fixed triple $(\lambda, \mu, \T)$, the same is true for almost all values of $t$:

\begin{proposition}\label{prop:red_kron_almost_all_mult}
Fix $\lambda, \mu, \T$, and let $N: = \max \{\abs{\lambda}, \abs{\mu} + \abs{\T} \}$. Let $t \notin \{0, ..., 2N-2 \}$. Consider the category $\underline{Rep}(S_t)$. Then the multiplicity of the simple object $X_{\lambda}$ in $X_{\mu} \otimes X_{\tau}$ is $\bar{g}^{\lambda}_{\mu, \tau}$.
\end{proposition}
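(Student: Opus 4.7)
The plan is to exploit the semisimple subcategory $\underline{Rep}(S_t)^{(N)}$ furnished by Lemma \ref{lem:ss_subcat_Del}. Since $|\mu|+|\T| \leq N$ and $|\lambda| \leq N$, the objects $X_\mu$, $X_\T$, $X_\lambda$ (each a direct summand of some $\fh^{\otimes k}$ with $k \leq N$) and their tensor product $X_\mu \otimes X_\T$ (a summand of $\fh^{\otimes(|\mu|+|\T|)}$) all lie in $\underline{Rep}(S_t)^{(N)}$. The hypothesis $t \notin \{0,\ldots,2N-2\}$ makes this subcategory semisimple, with the $X_\nu$ ($|\nu|\leq N$) as simples, so the multiplicity in question equals
$$[X_\mu \otimes X_\T : X_\lambda]_t = \dim_\bC \Hom_{\underline{Rep}(S_t)}(X_\mu \otimes X_\T, X_\lambda),$$
and by Proposition \ref{prop:lift_properties}(1),(2) it further equals $\dim \Hom_{\underline{Rep}(S_T)}(lift_t X_\mu \otimes lift_t X_\T,\; lift_t X_\lambda)$. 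Thus everything hinges on identifying these lifts.

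The key claim I would establish is that $lift_t X_\nu \cong X_\nu$ for every Young diagram $\nu$ with $|\nu|\leq N$. For $t \notin \bZ_+$ this is immediate from Lemma \ref{lem:nu_classes_struct}(1) and Lemma \ref{lem:nu_classes_lift}, since all $\stackrel{t}{\sim}$-classes are trivial. The substantive case is $t \in \bZ_+$ with $t \geq 2N-1$, which I would handle by contradiction: suppose $\nu = \nu^{(j)}$ belongs to a non-trivial $\stackrel{t}{\sim}$-class $\{\nu^{(i)}\}_i$ with $j \geq 1$. Combining the strict monotonicity of sizes along the class with the formula $|\nu^{(1)}| = t - \nu^{(0)}_1 + 1$ from Lemma \ref{lem:nu_classes_struct}(2), one obtains
$$N \geq |\nu^{(j)}| \geq |\nu^{(1)}| = t - \nu^{(0)}_1 + 1 \geq 2N - \nu^{(0)}_1,$$
forcing $\nu^{(0)}_1 \geq N$; but at the same time $\nu^{(0)}_1 \leq |\nu^{(0)}| < |\nu^{(j)}| \leq N$, a contradiction. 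Hence $\nu = \nu^{(0)}$, and Lemma \ref{lem:nu_classes_lift} yields $lift_t X_\nu = X_\nu$.

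Putting everything together, $lift_t X_\mu \otimes lift_t X_\T \cong X_\mu \otimes X_\T$ and $lift_t X_\lambda \cong X_\lambda$ in $\underline{Rep}(S_T)$, so the Hom-dimension computed in the first paragraph equals $\dim \Hom_{\underline{Rep}(S_T)}(X_\mu \otimes X_\T, X_\lambda) = \bar{g}^\lambda_{\mu, \T}$ by definition, finishing the proof. I expect the main obstacle to be precisely the size estimate in the second paragraph: the arithmetic needs to be arranged carefully enough that the exact threshold $2N-2$ in the hypothesis forces every $\nu$ of size $\leq N$ in a non-trivial class to be the minimal member $\nu^{(0)}$. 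Everything else is a formal combination of Lemma \ref{lem:ss_subcat_Del} with the lifting machinery of Proposition \ref{prop:lift_properties}.
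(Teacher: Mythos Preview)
Your proposal is correct and follows essentially the same route as the paper's proof: both use the semisimplicity of $\underline{Rep}(S_t)^{(N)}$ (Lemma \ref{lem:ss_subcat_Del}) together with the lifting machinery (Proposition \ref{prop:lift_properties} and Lemma \ref{lem:nu_classes_lift}) to identify the multiplicity with the Hom-dimension in $\underline{Rep}(S_T)$. The only difference is that the paper simply cites Lemma \ref{lem:nu_classes_lift} for the identities $lift_t(X_\lambda)\cong X_\lambda$, $lift_t(X_\mu)\cong X_\mu$, $lift_t(X_\tau)\cong X_\tau$, whereas you spell out the size estimate showing that any $\nu$ with $|\nu|\le N$ is minimal in its $\stackrel{t}{\sim}$-class; your argument there is correct and is exactly what underlies the paper's appeal to Lemma \ref{lem:nu_classes_lift} (and, implicitly, the proof of Lemma \ref{lem:ss_subcat_Del}).
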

\begin{proof}
 Almost the same arguments as in Proposition \ref{prop:groth_ring_generic_t} apply here: 
 
  By Lemma \ref{lem:nu_classes_lift}, $lift_t(X_{\lambda}) \cong X_{\lambda}$ and $$lift_t(X_{\mu} \otimes X_{\tau}) \cong lift_t(X_{\mu}) \otimes lift_t(X_{\T}) \cong X_{\mu} \otimes X_{\tau}$$
 Again,
 \begin{align*}
 &\bar{g}^{\lambda}_{\mu, \tau} =\dim_{Frak(\bC[[T]])} \Hom_{\underline{Rep}(S_T)}(X_{\mu} \otimes X_{\tau}, X_{\lambda}) = \\
&= \dim_{\bC} \Hom_{\underline{Rep}(S_t)}(X_{\mu} \otimes X_{\tau}, X_{\lambda})
 \end{align*}

The objects $X_{\mu} \otimes X_{\tau}$, $ X_{\lambda}$ are direct summands in $\fh^{\otimes \abs{\mu} + \abs{\T}}$, $\fh^{\otimes \abs{\lambda}}$ respectively. Therefore, they lie in $\underline{Rep}(S_t)^{(N)}$, which is semisimple (c.f. Lemma \ref{lem:ss_subcat_Del}). 

We conclude that $$\bar{g}^{\lambda}_{\mu, \tau} = \dim_{\bC} \Hom_{\underline{Rep}(S_t)}(X_{\mu} \otimes X_{\tau}, X_{\lambda}) = [ X_{\mu} \otimes X_{\tau}: X_{\lambda}]_{t}. $$
\end{proof}

To conclude this section, we prove a lemma which will be useful later on:
\begin{lemma}\label{lem:lifting_multip}
Let $\mu, \tau, \lambda$ be three Young diagrams, and let $n \in \bZ, n \geq \abs{\lam} + \lam_1$. Denote by $\{ \lambda^{(i)} \}_{i \geq 0}$ the $\stackrel{n}{\sim}$-class of $\lambda$ ($\lambda = \lambda^{(0)}$ since  $n \geq \abs{\lambda} +\lambda_1$).
Then 
$$ [X_{\mu} \otimes X_{\T}: X_{\lam}]_{t=n} = \sum_{j \geq 0} (-1)^j \dim_{Frak(\bC[[T]])} \Hom_{\underline{Rep}(S_T)}(lift_{t=n}(X_{\mu}) \otimes lift_{t=n}(X_{\tau}), X_{\lambda^{(j)}}) $$
\end{lemma}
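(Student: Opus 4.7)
The plan is to expand the tensor product in $\underline{Rep}(S_{t=n})$ into its indecomposable summands, apply $lift_{t=n}$ using Proposition \ref{prop:lift_properties} and Lemma \ref{lem:nu_classes_lift}, and then extract each multiplicity from the resulting decomposition in the semisimple category $\underline{Rep}(S_T)$ by a telescoping argument.

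Concretely, for each Young diagram $\nu$ set $m_\nu := [X_\mu \otimes X_\T : X_\nu]_{t=n}$, so that one has a finite decomposition $X_\mu \otimes X_\T \cong \bigoplus_\nu m_\nu X_\nu$ in $\underline{Rep}(S_{t=n})$. Applying $lift_{t=n}$ and invoking its additivity together with Proposition \ref{prop:lift_properties}(1) gives
\[
lift_{t=n}(X_\mu) \otimes lift_{t=n}(X_\T) \;\cong\; \bigoplus_\nu m_\nu \cdot lift_{t=n}(X_\nu)
\]
in $\underline{Rep}(S_T)$.

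Now fix $j \geq 0$ and compute the multiplicity $M_j$ of the simple object $X_{\lambda^{(j)}}$ in this direct sum. By Lemma \ref{lem:nu_classes_lift}, an indecomposable $X_\nu$ contributes a copy of $X_{\lambda^{(j)}}$ to its lift in exactly two situations: when $\nu = \lambda^{(j)}$ (with multiplicity $1$) and, in the non-trivial-class case, when $\nu = \lambda^{(j+1)}$ (again with multiplicity $1$, since $lift_{t=n}(X_{\lambda^{(j+1)}}) = X_{\lambda^{(j+1)}} \oplus X_{\lambda^{(j)}}$). Summands indexed by $\nu$ outside the $\stackrel{n}{\sim}$-class of $\lambda$ cannot contribute. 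Since $\underline{Rep}(S_T)$ is semisimple, this yields
\[
M_j \;=\; \dim_{Frak(\bC[[T]])} \Hom_{\underline{Rep}(S_T)}\!\bigl(lift_{t=n}(X_\mu) \otimes lift_{t=n}(X_\T),\, X_{\lambda^{(j)}}\bigr) \;=\; m_{\lambda^{(j)}} + m_{\lambda^{(j+1)}},
\]
with the convention $m_{\lambda^{(j+1)}} := 0$ if $\lambda^{(j+1)}$ does not exist (in particular when the class of $\lambda$ is trivial, only the $j=0$ term survives and the identity is immediate).

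Finally, only finitely many $m_\nu$ are nonzero, so the alternating sum $\sum_{j \geq 0}(-1)^j M_j = \sum_{j \geq 0}(-1)^j \bigl(m_{\lambda^{(j)}} + m_{\lambda^{(j+1)}}\bigr)$ is a finite telescoping sum that collapses to $m_{\lambda^{(0)}} = [X_\mu \otimes X_\T : X_\lambda]_{t=n}$, which is the desired identity. The hypothesis $n \geq |\lambda| + \lambda_1$ enters solely to guarantee $\lambda = \lambda^{(0)}$, so that the indices $j \geq 1$ correctly enumerate the diagrams strictly above $\lambda$ in its $\stackrel{n}{\sim}$-class. I do not anticipate a serious obstacle here: the argument is essentially bookkeeping, the only substantive input being the explicit description of $lift_{t=n}$ on indecomposables provided by Lemma \ref{lem:nu_classes_lift}.
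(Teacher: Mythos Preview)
Your proof is correct and follows essentially the same approach as the paper: decompose $X_\mu \otimes X_\tau$ into indecomposables, apply $lift_{t=n}$ using its additivity and multiplicativity together with Lemma~\ref{lem:nu_classes_lift}, obtain the relation $M_j = m_{\lambda^{(j)}} + m_{\lambda^{(j+1)}}$, and telescope. The paper in fact records the slightly more general intermediate identity $[X_\mu \otimes X_\tau : X_{\lambda^{(i)}}]_{t=n} = \sum_{j\geq 0}(-1)^j M_{i+j}$ for all $i\geq 0$ before specializing to $i=0$, but otherwise the arguments coincide.
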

\begin{proof}
By definition, we have:
$$ X_{\mu} \otimes X_{\T} = \bigoplus_{\rho \text{ a Young diagram}} [X_{\mu} \otimes X_{\T}: X_{\rho}]_{t=n} X_{\rho}$$
so $$ lift_{t=n}(X_{\mu} \otimes X_{\T}) = \bigoplus_{\rho \text{ a Young diagram}} [X_{\mu} \otimes X_{\T}: X_{\rho}]_{t=n} lift_{t=n}(X_{\rho})$$
On the other hand, 
$$ lift_{t=n}(X_{\mu} \otimes X_{\T}) = \bigoplus_{\rho \text{ a Young diagram}} \dim_{Frak(\bC[[T]])} \Hom_{\underline{Rep}(S_T)}(lift_{t=n}(X_{\mu}) \otimes lift_{t=n}(X_{\tau}), X_{\rho}) X_{\rho}$$
Now, by Lemma \ref{lem:nu_classes_lift}, $$lift_{t=n}(X_{\lam}) = X_{\lam}, \; \; \; lift_{t=n}(X_{\lam^{(i)}}) = X_{\lam^{(i)}} \oplus X_{\lam^{(i-1)}} \text{ for } i \geq 1$$
so for any $i \geq0$, $$\dim_{Frak(\bC[[T]])} \Hom_{\underline{Rep}(S_T)}(lift_{t=n}(X_{\mu}) \otimes lift_{t=n}(X_{\tau}), X_{\lambda^{(i)}}) = [X_{\mu} \otimes X_{\T}: X_{\lam^{(i)}}]_{t=n} +  [X_{\mu} \otimes X_{\T}: X_{\lam^{(i+1)}}]_{t=n}$$
and thus 
$$ [X_{\mu} \otimes X_{\T}: X_{\lam^{(i)}}]_{t=n} = \sum_{j \geq 0} (-1)^j \dim_{Frak(\bC[[T]])} \Hom_{\underline{Rep}(S_T)}(lift_{t=n}(X_{\mu}) \otimes lift_{t=n}(X_{\tau}), X_{\lambda^{(i+j)}}) $$
The statement of the lemma is just the special case when $i=0$.
\end{proof}

\section{Properties of reduced Kronecker coefficients}\label{sec:prop_Kron_coeff}
\subsection{Symmetry}\label{ssec:sym_Kron_coeff}
 The reduced Kronecker coefficient $\bar{g}^{\lambda}_{\mu, \tau}$ is symmetric in terms of the three partitions $\lambda, \mu, \T$.

In the context of the Deligne category $\underline{Rep}(S_{t})$ for a generic $t \in \bC$, this corresponds to the fact that $$\bar{g}^{\lambda}_{\mu, \tau} = \dim_{\bC} \Hom_{S_{t}}(X_{\lambda}, X_{\mu} \otimes X_{\tau})$$
and $$\Hom_{S_{t}}(X_{\lambda}, X_{\mu} \otimes X_{\tau}) \cong \Hom_{S_{t}}(1, X_{\lambda} \otimes X_{\mu} \otimes X_{\tau}) $$
(since any object in $\underline{Rep}(S_{t})$ is self-dual). The last expression is clearly symmetric in $\lambda, \mu, \T$.
\subsection{Murnaghan-Littlewood inequalities}\label{ssec:Murn_Littlewood_ineq}
We now give a sufficient condition on the Young diagrams for the reduced Kronecker coefficient to be zero. This is called the Murnaghan-Littlewood inequalities.

\begin{lemma}
Let $\lambda, \mu, \T$ be three partitions of arbitrary sizes, and let $t \in \bC$.

Then $[X_{\mu} \otimes X_{\tau}: X_{\lambda}]_t = 0$ whenever $\abs{\lambda} > \abs{\mu} + \abs{\T}$.
\end{lemma}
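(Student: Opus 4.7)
The plan is to realize $X_\mu \otimes X_\T$ as a direct summand of $\fh^{\otimes N}$, where $N := \abs{\mu}+\abs{\T}$, and then to show that every indecomposable summand of $\fh^{\otimes N}$ in $\underline{Rep}(S_t)$ has the form $X_\rho$ with $\abs{\rho} \leq N$. Together these give that no $X_\lam$ with $\abs{\lam} > N$ can appear in $X_\mu \otimes X_\T$, which is exactly the claim. The first step is immediate from the construction of the Deligne category: each $X_\rho$ is by definition a direct summand of $\fh^{\otimes \abs{\rho}}$, being cut out by a primitive idempotent in the partition algebra $\End(\fh^{\otimes \abs{\rho}})$, so tensoring gives $X_\mu \otimes X_\T$ as a direct summand of $\fh^{\otimes \abs{\mu}} \otimes \fh^{\otimes \abs{\T}} = \fh^{\otimes N}$.

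For the second step, when $t \notin \{0, 1, \ldots, 2N-2\}$, the subcategory $\underline{Rep}(S_t)^{(N)}$ is semisimple by Lemma \ref{lem:ss_subcat_Del}, and the remark following that lemma identifies its simple objects as precisely the $X_\rho$ with $\abs{\rho} \leq N$; since $\fh^{\otimes N}$ lies in this subcategory, the claim is immediate. For $t = n$ in the critical range $\{0,\ldots,2N-2\}$, I would bootstrap via the lifting functor. Supposing for contradiction that $X_\lam$ with $\abs{\lam} > N$ appeared as a summand of $X_\mu \otimes X_\T$ in $\underline{Rep}(S_{t=n})$, Proposition \ref{prop:lift_properties} would yield that $lift_{t=n}(X_\lam)$ is a summand of $lift_{t=n}(X_\mu) \otimes lift_{t=n}(X_\T)$ in $\underline{Rep}(S_T)$. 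Lemma \ref{lem:nu_classes_lift} guarantees that $X_\lam$ is always a direct summand of $lift_{t=n}(X_\lam)$, while each indecomposable summand $X_{\mu'}$ of $lift_{t=n}(X_\mu)$ satisfies $\abs{\mu'} \leq \abs{\mu}$ (since the chain $\mu^{(0)} \subset \mu^{(1)} \subset \cdots$ from Lemma \ref{lem:nu_classes_struct} is strictly increasing in size), and analogously for $\T$. Therefore $X_\lam$ would occur as a summand of some $X_{\mu'} \otimes X_{\T'}$ in the semisimple category $\underline{Rep}(S_T)$ with $\abs{\mu'}+\abs{\T'} \leq N < \abs{\lam}$, contradicting the generic case already settled inside $\underline{Rep}(S_T)$.

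The main obstacle is precisely the non-semisimple case of small non-negative integer $t$, where the subcategory $\underline{Rep}(S_t)^{(N)}$ is not semisimple and the direct argument via Lemma \ref{lem:ss_subcat_Del} fails; the lifting functor from Subsection \ref{ssec:nu_class_and_lift} is the tool that bypasses this difficulty by transferring the computation to the uniformly semisimple category $\underline{Rep}(S_T)$, where the generic argument applies without restriction.
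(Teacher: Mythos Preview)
Your argument is correct, but it is considerably more elaborate than the paper's. The paper dispatches the lemma in two lines by invoking a single structural fact from the construction of $\underline{Rep}(S_t)$ (cited from \cite[Section 2]{CO}): for \emph{every} $t\in\bC$, the indecomposable $X_\rho$ appears as a direct summand of $\fh^{\otimes|\rho|}$ but \emph{not} of $\fh^{\otimes m}$ for any $m<|\rho|$. This statement is part of how the indecomposables are built from primitive idempotents in the partition algebras $\End(\fh^{\otimes k})$, and it does not depend on whether the parameter is generic or a small non-negative integer. Given that, the conclusion is immediate with no case split and no lifting.

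You instead use the remark after Lemma~\ref{lem:ss_subcat_Del} to get this fact only for $t$ outside the critical range, and then bootstrap to critical $t=n$ via $lift_{t=n}$ and Lemma~\ref{lem:nu_classes_lift}. This works, and the details you give (that each summand of $lift_{t=n}(X_\mu)$ has size $\leq|\mu|$, that the semisimple category $\underline{Rep}(S_T)$ settles the contradiction) are all sound. But the detour is avoidable: the very remark you cite (``the simple objects in $\underline{Rep}(S_t)^{(N)}$ are exactly the $X_\rho$ with $|\rho|\leq N$'') is itself a consequence of the uniform structural fact above, so you are implicitly appealing to the same ingredient while adding an extra layer of machinery on top of it. The paper's route is shorter; yours has the minor virtue of illustrating how lifting transfers indecomposable-summand questions to the generic setting, which is a technique used elsewhere in the paper.
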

\begin{proof}
 Recall from the construction of Deligne's category (\cite[Section 2]{CO}) that the indecomposable object $X_{\rho}$ appears as a direct summand of $\fh^{\otimes \abs{\rho}}$, but does not appear in smaller tensor powers of $\fh$. Thus $X_{\mu} \otimes X_{\tau}$ is a direct summand of $\fh^{\otimes (\abs{\mu} + \abs{\T})}$, while $X_{\lambda}$ cannot appear as a direct summand in $\fh^{\otimes (\abs{\mu} + \abs{\T})}$ if $\abs{\lambda} > \abs{\mu} + \abs{\T}$.
\end{proof}

Applying this lemma to the case $t \notin \bZ_+$, we immediately obtain the following well-known Murnaghan-Littlewood inequalities (c.f. \cite{Mu1}):

\begin{corollary}\label{cor:red_Kron_zero}
Let $\lambda, \mu, \T$ be three partitions of arbitrary sizes.
Then $\bar{g}^{\lambda}_{\mu, \tau} \neq 0$ implies $\abs{\lambda} \leq \abs{\mu} + \abs{\T}$.
\end{corollary}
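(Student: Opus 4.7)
The plan is to reduce the lemma to a single structural fact about Deligne's construction: that each indecomposable $X_\rho$ has a well-defined ``degree'' $|\rho|$, namely the smallest tensor power of $\fh$ in which it occurs as a summand. Concretely, the input I would invoke from \cite[Section 2]{CO} is that $X_\rho$ is a direct summand of $\fh^{\otimes |\rho|}$ for every $\rho$, and that $X_\rho$ does \emph{not} appear as a direct summand of $\fh^{\otimes k}$ for any $k<|\rho|$. This gives a $\bZ_+$-valued ``length function'' on indecomposables that behaves compatibly with tensor products in the sense relevant to the inequality.

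Given this, I would first argue that $X_\mu \otimes X_\tau$ is a summand of $\fh^{\otimes(|\mu|+|\tau|)}$: since $X_\mu$ is a summand of $\fh^{\otimes |\mu|}$ and $X_\tau$ a summand of $\fh^{\otimes |\tau|}$, applying the monoidal structure yields that $X_\mu \otimes X_\tau$ is a summand of $\fh^{\otimes |\mu|} \otimes \fh^{\otimes |\tau|} \cong \fh^{\otimes(|\mu|+|\tau|)}$. Because $\underline{Rep}(S_t)$ is Karoubian, idempotents split and the indecomposable decomposition is well defined (Krull--Schmidt), so any indecomposable summand of $X_\mu \otimes X_\tau$ is automatically an indecomposable summand of $\fh^{\otimes(|\mu|+|\tau|)}$.

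Now suppose $|\lambda|>|\mu|+|\tau|$. By the key fact, $X_\lambda$ cannot occur as a summand of $\fh^{\otimes(|\mu|+|\tau|)}$ (since $|\mu|+|\tau|<|\lambda|$), and hence cannot occur as a summand of $X_\mu \otimes X_\tau$ either. Therefore $[X_\mu \otimes X_\tau : X_\lambda]_t = 0$, which is exactly the statement.

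I do not anticipate a serious obstacle: the entire argument is immediate once the minimal-tensor-power property of the indecomposables is in hand. The only point needing slight care is Krull--Schmidt for $\underline{Rep}(S_t)$ so that the multiplicities $[-:X_\lambda]_t$ are well defined; this is guaranteed by the Karoubian (pseudo-abelian) hypothesis, as recalled at the start of Section \ref{sec:notation}.
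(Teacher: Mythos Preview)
Your approach is essentially the paper's: both rest on the fact from \cite[Section 2]{CO} that $X_\rho$ first appears as a summand of $\fh^{\otimes|\rho|}$ and in no smaller tensor power, so $X_\lambda$ cannot occur in $X_\mu\otimes X_\tau$ (itself a summand of $\fh^{\otimes(|\mu|+|\tau|)}$) once $|\lambda|>|\mu|+|\tau|$. Two small points: (i) the corollary is about $\bar g^{\lambda}_{\mu,\tau}$, not $[X_\mu\otimes X_\tau:X_\lambda]_t$, so you still need to specialize to $t\notin\bZ_+$ (or work over $T$) and invoke Proposition~\ref{prop:groth_ring_generic_t} to identify the two---the paper does exactly this, recording your argument as a separate lemma valid for all $t$ and then deducing the corollary; (ii) Krull--Schmidt is not a consequence of the Karoubian axiom alone, but it does hold here because the $\Hom$-spaces are finite-dimensional over the ground field.
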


Of course, since $\bar{g}^{\lambda}_{\mu, \tau}$ is symmetric with respect to the three Young diagrams, we conclude that $\abs{\T} \leq \abs{\mu} +\abs{\lambda}, \abs{\mu} \leq \abs{\lambda} +\abs{\T}$ as well.
\subsection{Reduced Kronecker coefficients and Littlewood-Richardson coefficients}\label{ssec:Kron_vs_LR_coeff}
The following proposition is proved in \cite[Proposition 5.11]{D}:
\begin{proposition}
When $\abs{\lambda} = \abs{\mu} +\abs{\T}$, the reduced Kronecker coefficient $\bar{g}^{\lambda}_{\mu, \tau}$ is equal to the Littlewood-Richardson coefficient $$c^{\lambda}_{\mu, \tau} := \dim_{\bC} \Hom_{S_{\abs{\mu}} \times S_{\abs{\tau}}} (Res_{S_{\abs{\mu}} \times S_{\abs{\tau}}}^{S_{\abs{\lambda}}} \lambda, \mu \otimes \tau)$$ 

\end{proposition}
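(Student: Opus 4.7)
The plan is to reduce the computation to a calculation in the classical category $Rep(S_n)$ for $n$ sufficiently large, and then to invoke the classical identity between Kronecker and Littlewood--Richardson coefficients at ``top weight''.

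First I would set $N := \abs{\mu}+\abs{\tau} = \abs{\lambda}$ and choose an integer $n$ with $n > 2N-2$ and $n \geq \abs{\lambda}+\lambda_1$. By Proposition \ref{prop:red_kron_almost_all_mult} this gives $\bar{g}^{\lambda}_{\mu,\tau} = [X_{\mu} \otimes X_{\tau} : X_{\lambda}]_{t=n}$. All of $X_{\mu}, X_{\tau}, X_{\lambda}$, and $X_{\mu} \otimes X_{\tau}$ lie in the semisimple subcategory $\underline{Rep}(S_{t=n})^{(N)}$ of Lemma \ref{lem:ss_subcat_Del}, on which $\mathcal{S}_n$ is fully faithful. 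Since $\mathcal{S}_n$ sends $X_{\rho}$ to the irreducible $S_n$-representation indexed by $\widetilde{\rho}(n)$, the multiplicity transfers to $Rep(S_n)$, yielding $\bar{g}^{\lambda}_{\mu,\tau} = g^{\widetilde{\lambda}(n)}_{\widetilde{\mu}(n), \widetilde{\tau}(n)}$.

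Next I would appeal to the classical fact that, when $\abs{\lambda} = \abs{\mu}+\abs{\tau}$ and $n$ is sufficiently large, $g^{\widetilde{\lambda}(n)}_{\widetilde{\mu}(n), \widetilde{\tau}(n)} = c^{\lambda}_{\mu,\tau}$. One way to see this is by a Frobenius reciprocity argument: realise $\widetilde{\mu}(n)$ as the ``minimal'' summand of $\Ind_{S_{n-\abs{\mu}} \times S_{\abs{\mu}}}^{S_n}(\triv \boxtimes \mu)$ and similarly for $\widetilde{\tau}(n)$, and note that in the stable range the only contribution to the Kronecker pairing comes from a diagonal Young subgroup $S_{n-N} \times S_N$ with $S_N$ acting via induction from $S_{\abs{\mu}} \times S_{\abs{\tau}}$, producing precisely $c^{\lambda}_{\mu,\tau}$.

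The hard part will be this second step, and a cleaner, fully categorical alternative---presumably the route taken in \cite[Proposition 5.11]{D}---is to stay inside $\underline{Rep}(S_T)$: one decomposes $\fh^{\otimes N} = \bigoplus_{\abs{\rho} \leq N} m_\rho X_\rho$, observes that for $\abs{\rho} = N$ the multiplicity $m_\rho$ equals $\dim \rho$, and identifies the corresponding multiplicity space with the Specht module $\rho$ viewed as an $S_N$-module via the permutation diagrams $\bC[S_N] \subset \End_{\underline{Rep}(S_T)}(\fh^{\otimes N})$. Then $\Hom_{\underline{Rep}(S_T)}(X_\lambda, X_\mu \otimes X_\tau)$ is precisely $\Hom_{S_N}\!\bigl(\lambda,\, \Ind_{S_{\abs{\mu}} \times S_{\abs{\tau}}}^{S_N}(\mu \boxtimes \tau)\bigr)$, which equals $c^{\lambda}_{\mu,\tau}$ by Frobenius reciprocity. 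The key technical point to verify along this route is that the non-permutation partition diagrams in $\End_{\underline{Rep}(S_T)}(\fh^{\otimes N})$ contribute nothing at top weight $\abs{\lambda} = N$, which should follow because any such diagram factors through $\fh^{\otimes k}$ with $k < N$, hence through summands $X_\rho$ with $\abs{\rho} < N$.
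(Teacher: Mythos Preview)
Your second, ``categorical'' route is precisely the paper's argument: what you call the top-weight part of $\fh^{\otimes N}$ is named $\Delta_k^*$ there (the largest summand of Deligne's object $\Delta_k$ having no common summand with $\Delta_{k-1}$), and the paper records the decomposition $\Delta_k^* = \bigoplus_{\abs{\rho}=k} X_\rho \otimes \rho$ as an $(\underline{Rep}(S_t), S_k)$-bimodule. The observation that $\Delta_{\abs{\mu}} \otimes \Delta_{\abs{\tau}}$ contains $\Delta_{\abs{\lambda}}$ exactly once then yields the result just as you outline; your remark that non-permutation diagrams factor through $\fh^{\otimes k}$ with $k<N$ is exactly the content of passing to $\Delta_k^*$.

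Your first route---reducing via $\mathcal{S}_n$ to the classical identity $g^{\widetilde{\lambda}(n)}_{\widetilde{\mu}(n),\widetilde{\tau}(n)} = c^{\lambda}_{\mu,\tau}$ for large $n$---is also correct but genuinely different from the paper. It exports the substantive step to a computation in $Rep(S_n)$, whereas the paper (following Deligne) stays entirely inside $\underline{Rep}(S_t)$ and never invokes an actual symmetric group. The categorical route is more in the spirit of the section, since the aim is to exhibit the Littlewood--Richardson coefficients as the top-degree structure constants of the Deligne category; the first route reproves that same fact from the outside in.
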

We give a sketch of the proof following \cite[Sections 2, 5]{D}.
\begin{proof}[Sketch of proof]
The construction of any indecomposable object $X_{\lambda}$ in $\underline{Rep}(S_t)$ can be done in two ways: one is to consider $X_{\lambda}$ as a direct summand of $\fh^{\otimes \abs{\lambda}}$, and the other is to consider $X_{\lambda}$ as a direct summand of $\Delta_{\abs{\lambda}}$. 

Let $k = \abs{\lambda}$. Consider the action of $S_{k}$ on $\Delta_{k}^*$, which is the largest direct summand of $\Delta_{k}$ having no common direct summands with $\Delta_{k-1}$. The decomposition of $\Delta_{k}^*$ into irreducible $S_{k}$-representations with respect to this action gives us 
$$\Delta_{k}^* = \bigoplus_{\abs{\rho} = k} X_{\rho} \otimes \rho$$

Now, $X_{\mu} \otimes X_{\tau}$ is a direct summand of $\Delta_{\abs{\mu}} \otimes \Delta_{\abs{\T}}$. The latter decomposes as a direct sum of $\Delta_k$ for $k \leq \abs{\mu} +\abs{\T} = \abs{\lambda}$, with $\Delta_{\abs{\lambda}}$ appearing with multiplicity $1$. This allows us to conclude that
$$[X_{\mu} \otimes X_{\tau}: X_{\lambda}]=\dim_{\bC} \Hom_{S_{\abs{\mu}} \times S_{\abs{\tau}}} (Res_{S_{\abs{\mu}} \times S_{\abs{\tau}}}^{S_{\abs{\lambda}}} \lambda, \mu \otimes \tau) =: c^{\lambda}_{\mu, \tau}.$$

\end{proof}
\begin{corollary}
For any $t \notin \bZ_+$, fix a filtration on the Grothendieck ring $\mathfrak{R}(S_t)$ of $\underline{Rep}(S_t)$ by setting $X_{\lambda} \in Filtra_{\abs{\lambda}}(\mathfrak{R}(S_t))$. Then the associated graded ring $Gr(\mathfrak{R}(S_t))$ is isomorphic to the ring of symmetric functions, as defined in \cite[Chapter I, Par. 5]{Mac}, with $X_{\lambda}$ corresponding to the Schur function $s_{\lambda}$.
\end{corollary}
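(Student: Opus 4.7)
The plan is to reduce the statement to three ingredients already assembled in the paper: the Murnaghan--Littlewood vanishing, the identification of the ``top'' reduced Kronecker coefficients with Littlewood--Richardson coefficients, and the classical description of the ring of symmetric functions via the Schur basis and its LR multiplication.

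First I would check that $Filtra_{\bullet}(\mathfrak{R}(S_t))$ is actually a ring filtration. By definition $Filtra_k(\mathfrak{R}(S_t))$ is the $\bZ$-span of $\{X_{\lambda} : \abs{\lambda}\leq k\}$. For any $\mu,\T$ with $\abs{\mu}\leq k$ and $\abs{\T}\leq m$, Corollary~\ref{cor:red_Kron_zero} gives $X_{\mu}\otimes X_{\T}=\bigoplus_{\lambda}\bar g^{\lambda}_{\mu,\T}X_{\lambda}$ with $\bar g^{\lambda}_{\mu,\T}=0$ whenever $\abs{\lambda}>\abs{\mu}+\abs{\T}$. Hence $Filtra_k\cdot Filtra_m\subseteq Filtra_{k+m}$, and the associated graded $Gr(\mathfrak{R}(S_t))=\bigoplus_k Filtra_k/Filtra_{k-1}$ is a well-defined graded ring, with $X_{\lambda}$ represented in degree $\abs{\lambda}$.

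Next I would compute the product in $Gr(\mathfrak{R}(S_t))$. In degree $\abs{\mu}+\abs{\T}$, the class of $X_{\mu}\cdot X_{\T}$ in $Filtra_{\abs{\mu}+\abs{\T}}/Filtra_{\abs{\mu}+\abs{\T}-1}$ is
\[
\sum_{\abs{\lambda}=\abs{\mu}+\abs{\T}}\bar g^{\lambda}_{\mu,\T}\,X_{\lambda},
\]
because by Corollary~\ref{cor:red_Kron_zero} there is no contribution from $\abs{\lambda}>\abs{\mu}+\abs{\T}$, while contributions from $\abs{\lambda}<\abs{\mu}+\abs{\T}$ are killed modulo $Filtra_{\abs{\mu}+\abs{\T}-1}$. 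By the Deligne proposition quoted just above the corollary, for $\abs{\lambda}=\abs{\mu}+\abs{\T}$ we have $\bar g^{\lambda}_{\mu,\T}=c^{\lambda}_{\mu,\T}$, the Littlewood--Richardson coefficient.

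Finally I would invoke the classical description of the ring $\Lambda$ of symmetric functions: the Schur functions $\{s_{\lambda}\}$ form a $\bZ$-basis of $\Lambda$, and the Littlewood--Richardson rule states
\[
s_{\mu}\cdot s_{\T}=\sum_{\abs{\lambda}=\abs{\mu}+\abs{\T}}c^{\lambda}_{\mu,\T}\,s_{\lambda}
\]
(with the sum automatically restricted to $\abs{\lambda}=\abs{\mu}+\abs{\T}$ because $\Lambda$ is graded and $s_{\lambda}$ sits in degree $\abs{\lambda}$). Thus the $\bZ$-linear map $\Phi\colon Gr(\mathfrak{R}(S_t))\to\Lambda$ sending $X_{\lambda}$ to $s_{\lambda}$ matches bases in each degree and matches structure constants by the previous step, so $\Phi$ is an isomorphism of graded rings.

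The only real point requiring care is the middle step: one must make sure that the equation $[X_{\mu}\otimes X_{\T}:X_{\lambda}]_t=c^{\lambda}_{\mu,\T}$ for $\abs{\lambda}=\abs{\mu}+\abs{\T}$, which was proved for a generic $t$, is exactly what is needed in the graded piece and that Murnaghan--Littlewood kills everything strictly above this degree; both are already in hand from Section~\ref{ssec:Murn_Littlewood_ineq} and the preceding proposition, so no further computation is necessary.
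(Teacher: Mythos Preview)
Your proposal is correct and follows exactly the argument the paper intends: the corollary is stated without a separate proof because it is immediate from Corollary~\ref{cor:red_Kron_zero} (which makes the filtration multiplicative) together with the preceding proposition identifying the top-degree reduced Kronecker coefficients with Littlewood--Richardson coefficients. Your write-up simply makes this implicit reasoning explicit, and there is nothing to add.
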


\begin{remark}
The ring $Gr(\mathfrak{R}(S_t))$ is also isomorphic to the Grothendieck ring of the tensor category $\bigoplus_{n \geq 0} Rep(S_n)$, with the tensor product given by the Bernstein-Zelevinsky product $$\mu \otimes \tau := Ind_{S_{\abs{\mu}} \times S_{\abs{\tau}}}^{S_{\abs{\mu} +\abs{\T}}} \mu \otimes \tau$$ 

One can show that the same is true for the all the abelian categories $\underline{Rep}^{ab}(S_{t})$: taking an appropriate filtration on the Grothendieck ring of $\underline{Rep}^{ab}(S_{t})$, the associated graded ring will be isomorphic to the ring of symmetric functions. The filtration should be taken so that the simple object corresponding to the Young diagram $\lam$ lies in the filtra $\abs{\lam}$.
\end{remark}

\subsection{Reduced Kronecker coefficients and standard Kronecker coefficients}\label{ssec:red_Kron_vs_Kron}

The following proposition shows that the reduced Kronecker coefficient $\bar{g}^{\lambda}_{\mu, \tau}$ is a stable value of a sequence of standard Kronecker coefficients, and gives a formula for recovering standard Kronecker coefficients from reduced Kronecker coefficients. This formula appears in \cite[Theorem 1.1]{BOR} in a slightly different form; we show that it can be obtained directly from object lifting for Deligne categories. 

\begin{proposition}\label{prop:Kron_vs_reduced_Kron}
 Let $\lambda, \mu, \T$ be three partitions of arbitrary sizes, and let 
 
 $N:= \max \{ \abs{\lambda} + \lambda_1, \abs{\mu} + \mu_1, \abs{\T} + \T_1 \}$. 
 \begin{enumerate}
  \item Consider the sequence $$\{ g^{\widetilde{\lambda}(n)}_{\widetilde{\mu}(n), \widetilde{\tau}(n)}\}_{n \geq N} $$ of standard Kronecker coefficients. This sequence stabilizes, and the stable value is the reduced Kronecker coefficient $\bar{g}^{\lambda}_{\mu, \tau}$.
  \item Let $n \geq N$. Denote by $\{ \lambda^{(i)} \}_{i \geq 0}$ the $\stackrel{n}{\sim}$-class of $\lambda$ ($\lambda = \lambda^{(0)}$ since  $n \geq \abs{\lambda} +\lambda_1$). Then $$g^{\widetilde{\lambda}(n)}_{\widetilde{\mu}(n), \widetilde{\tau}(n)} = [X_{\mu} \otimes X_{\tau}: X_{\lambda = \lambda^{(0)}}]_{t=n} = \sum_{i \geq 0} (-1)^{i} \bar{g}^{\lambda^{(i)}}_{\mu, \tau} $$
  
  \item The stable value $\bar{g}^{\lambda}_{\mu, \tau}$ is the maximum of the sequence $\{ g^{\widetilde{\lambda}(n)}_{\widetilde{\mu}(n), \widetilde{\tau}(n)}\}_{n \geq N} $. Namely, $$ \bar{g}^{\lambda}_{\mu, \tau} \geq g^{\widetilde{\lambda}(n)}_{\widetilde{\mu}(n), \widetilde{\tau}(n)} $$
 \end{enumerate}

\end{proposition}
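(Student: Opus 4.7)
The overall strategy is to reinterpret the standard Kronecker coefficient $g^{\widetilde{\lambda}(n)}_{\widetilde{\mu}(n), \widetilde{\tau}(n)}$ as a multiplicity in the (non-abelian, non-semisimple) Karoubian category $\underline{Rep}(S_{t=n})$, apply Lemma \ref{lem:lifting_multip} to convert that multiplicity into an alternating sum of reduced Kronecker coefficients, and deduce (1) and (3) from the resulting identity.

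The first step is the identity
\[ g^{\widetilde{\lambda}(n)}_{\widetilde{\mu}(n), \widetilde{\tau}(n)} = [X_{\mu} \otimes X_{\tau} : X_{\lambda}]_{t=n}. \]
I will obtain it by applying the symmetric monoidal functor $\mathcal{S}_n : \underline{Rep}(S_{t=n}) \to Rep(S_n)$ to the decomposition of $X_{\mu} \otimes X_{\tau}$ into indecomposables. Under $\mathcal{S}_n$, an indecomposable $X_{\rho}$ maps to the irreducible $S_n$-representation $\widetilde{\rho}(n)$ when $\abs{\rho} + \rho_1 \leq n$ and to zero otherwise; the assignment $\rho \mapsto \widetilde{\rho}(n)$ is injective on that range (recover $\rho$ by dropping the top row). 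Since $n \geq \abs{\lambda} + \lambda_1$, reading off the multiplicity of $\widetilde{\lambda}(n)$ in $\widetilde{\mu}(n) \otimes \widetilde{\tau}(n) = \mathcal{S}_n(X_\mu \otimes X_\tau)$ yields exactly $[X_{\mu} \otimes X_{\tau} : X_{\lambda}]_{t=n}$.

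Part (2) is then immediate from Lemma \ref{lem:lifting_multip}: the conditions $n \geq \abs{\mu} + \mu_1$ and $n \geq \abs{\tau} + \tau_1$ put $\mu$ and $\tau$ as base elements of their non-trivial $\stackrel{n}{\sim}$-classes (Lemma \ref{lem:nu_classes_struct}), so by Lemma \ref{lem:nu_classes_lift} we have $lift_{t=n}(X_{\mu}) \cong X_{\mu}$ and $lift_{t=n}(X_{\tau}) \cong X_{\tau}$, and the right-hand side of Lemma \ref{lem:lifting_multip} becomes $\sum_{j \geq 0}(-1)^j \bar{g}^{\lambda^{(j)}}_{\mu, \tau}$. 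For (1), the Murnaghan--Littlewood inequality (Corollary \ref{cor:red_Kron_zero}) forces $\bar{g}^{\lambda^{(i)}}_{\mu, \tau} = 0$ whenever $\abs{\lambda^{(i)}} > \abs{\mu} + \abs{\tau}$, and by Lemma \ref{lem:nu_classes_struct} we have $\abs{\lambda^{(i)}} \geq n - \lambda_1 + 1$ for every $i \geq 1$; hence once $n$ is large enough, every $i \geq 1$ term in the alternating sum vanishes and the sequence stabilizes to $\bar{g}^{\lambda}_{\mu, \tau}$.

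For (3), I plan to extract from the proof of Lemma \ref{lem:lifting_multip} the single-step recursion
\[ a_i + a_{i+1} = b_i, \qquad a_i := [X_{\mu} \otimes X_{\tau} : X_{\lambda^{(i)}}]_{t=n}, \qquad b_i := \bar{g}^{\lambda^{(i)}}_{\mu, \tau}, \]
obtained by comparing the coefficient of $X_{\lambda^{(i)}}$ in the two expressions for $lift_{t=n}(X_{\mu} \otimes X_{\tau})$: one via the indecomposable decomposition of $X_{\mu} \otimes X_{\tau}$ combined with the formulas $lift_{t=n}(X_{\lambda^{(k)}}) = X_{\lambda^{(k)}} \oplus X_{\lambda^{(k-1)}}$ of Lemma \ref{lem:nu_classes_lift}, and the other via the semisimple decomposition of $X_\mu \otimes X_\tau \cong lift_{t=n}(X_\mu \otimes X_\tau)$ in $\underline{Rep}(S_T)$. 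Taking $i = 0$ gives $b_0 - a_0 = a_1 \geq 0$, i.e.\ $\bar{g}^{\lambda}_{\mu, \tau} \geq g^{\widetilde{\lambda}(n)}_{\widetilde{\mu}(n), \widetilde{\tau}(n)}$, which is (3). The only real obstacle is this bookkeeping step: (2) follows from the collapsed alternating-sum form, but (3) genuinely needs the recursion at each height, not only at height $0$, so one must avoid the temptation to simply invoke Lemma \ref{lem:lifting_multip} as a black box.
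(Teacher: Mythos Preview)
Your argument is correct and follows the paper's route almost exactly: identify $g^{\widetilde{\lambda}(n)}_{\widetilde{\mu}(n),\widetilde{\tau}(n)}$ with $[X_\mu\otimes X_\tau:X_\lambda]_{t=n}$ via $\mathcal{S}_n$, use Lemma~\ref{lem:nu_classes_lift} and Lemma~\ref{lem:lifting_multip} for (2), and for (3) extract the one-step relation $a_0+a_1=b_0$ from inside the proof of Lemma~\ref{lem:lifting_multip} to conclude $b_0-a_0=a_1\geq 0$. The only deviation is in (1): the paper appeals directly to Proposition~\ref{prop:red_kron_almost_all_mult} (semisimplicity of $\underline{Rep}(S_t)^{(N)}$) to get stabilization for $n\geq 2(\abs{\mu}+\abs{\tau})$, whereas you instead deduce it from the formula in (2) together with Corollary~\ref{cor:red_Kron_zero}, noting that $\abs{\lambda^{(i)}}\geq n-\lambda_1+1$ for $i\geq 1$; this yields the bound $n\geq \abs{\mu}+\abs{\tau}+\lambda_1$, essentially the sharper bound the paper mentions parenthetically. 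One small quibble: your closing remark that ``(3) genuinely needs the recursion at each height'' overstates things---only the instance $i=0$ is actually used---but this does not affect the validity of what you wrote.
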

\begin{remark}
 Note that Corollary \ref{cor:red_Kron_zero} implies that the sum $\sum_{i \geq 0} (-1)^{i} \bar{g}^{\lambda^{(i)}}_{\mu, \tau} $ is finite (due to the fact that $\{\abs{\lambda^{(i)}} \}_i$ is a strongly increasing sequence).
\end{remark}

\begin{proof}
Let $n \geq N$. Recall the symmetric monoidal functor $\mathcal{S}_n: \underline{Rep}(S_{t=n}) \rightarrow Rep(S_n)$ described in Section \ref{sec:Del_cat_S_nu}. Due to the requirement on $N$, we have: 
$$\mathcal{S}_n(X_{\lambda}) \cong \widetilde{\lambda}(n), \, \mathcal{S}_n(X_{\mu}) \cong \widetilde{\mu}(n), \, \mathcal{S}_n(X_{\T}) \cong \widetilde{\tau}(n)$$
and since $\mathcal{S}_n$ preserves tensor products, we see that $$[ X_{\mu} \otimes X_{\tau}: X_{\lambda}]_{t = n} = g^{\widetilde{\lambda}(n)}_{\widetilde{\mu}(n), \widetilde{\tau}(n)}$$
\begin{enumerate}
\item Let $n \geq 2(\abs{\mu}+\abs{\tau})$. By Proposition \ref{prop:red_kron_almost_all_mult}, 
$$\bar{g}^{\lambda}_{\mu, \tau} = [ X_{\mu} \otimes X_{\tau}: X_{\lambda}]_{t = n} $$

and thus $$ \bar{g}^{\lambda}_{\mu, \tau} = g^{\widetilde{\lambda}(n)}_{\widetilde{\mu}(n), \widetilde{\tau}(n)}$$ for any $n \geq 2(\abs{\mu}+\abs{\tau})$ (in fact, one can use Lemma \ref{lem:nu_classes_lift} to show that this stable value is reached when $n \geq \abs{\mu}+\abs{\tau}+\mu_1+\tau_1$). This proves Part (1).

\item Let $n \geq N$.
The objects $X_{\lambda}, X_{\mu}, X_{\T}$ are all minimal in their respective $\stackrel{n}{\sim}$-classes, so by Lemma \ref{lem:nu_classes_lift}, $$lift_{t=n}(X_{\mu}) \cong X_{\mu}, \, lift_{t=n}(X_{\T}) \cong X_{\T}$$
By Lemma \ref{lem:lifting_multip}, we have:
\begin{align*}
 &[X_{\mu} \otimes X_{\T}: X_{\lambda = \lambda^{(0)}}]_{t=n} = \sum_{i \geq 0} (-1)^i \dim_{Frak(\bC[[T]])} \Hom_{\underline{Rep}(S_T)}(lift_{t=n}(X_{\mu}) \otimes lift_{t=n}(X_{\tau}), X_{\lambda^{(i)}}) = \\
 &=\sum_{i \geq 0} (-1)^i [X_{\mu} \otimes X_{\tau}: X_{\lambda^{(i)}}]_T = \sum_{i \geq 0} (-1)^{i} \bar{g}^{\lambda^{(i)}}_{\mu, \tau} 
\end{align*}
and so
$$g^{\widetilde{\lambda}(n)}_{\widetilde{\mu}(n), \widetilde{\tau}(n)} = [X_{\mu} \otimes X_{\tau}: X_{\lambda}]_{t=n} = \sum_{i \geq 0} (-1)^{i} \bar{g}^{\lambda^{(i)}}_{\mu, \tau} $$
which completes the proof of Part (2).

\item We proved in the proof of Lemma \ref{lem:lifting_multip} that $$ [X_{\mu} \otimes X_{\T}: X_{\lambda^{(1)}}]_{t=n} = \sum_{i \geq 1} (-1)^{i-1} [X_{\mu} \otimes X_{\tau}: X_{\lambda^{(i)}}]_T = \sum_{i \geq 1} (-1)^{i-1} \bar{g}^{\lambda^{(i)}}_{\mu, \tau} $$

This multiplicity is a non-negative number, and we just showed that $$\sum_{i \geq 1} (-1)^{i-1} \bar{g}^{\lambda^{(i)}}_{\mu, \tau}  = \bar{g}^{\lambda^{(0)}}_{\mu, \tau} - g^{\widetilde{\lambda}(n)}_{\widetilde{\mu}(n), \widetilde{\tau}(n)}$$ So $$\bar{g}^{\lambda^{(0)} = \lambda}_{\mu, \tau} \geq g^{\widetilde{\lambda}(n)}_{\widetilde{\mu}(n), \widetilde{\tau}(n)}.$$
\end{enumerate}
\end{proof}

\begin{remark}
 The fact that the sequence $\{ g^{\widetilde{\lambda}(n)}_{\widetilde{\mu}(n), \widetilde{\tau}(n)} \}_n$ stabilizes was proved by Murnaghan, see \cite{Mu1, Mu2}. The reduced Kronecker coefficients (sometimes also called ``stable Kronecker coefficients'') were originally defined as the stabilizing values of such sequences (see \cite{BOR}, for example).
 
 In addition, it was proved in \cite{Br} that this is a weakly increasing sequence (in particular, this implies that its stable valu is its maximum).
\end{remark}
\begin{remark}
 Similarly to the proof of Proposition \ref{prop:Kron_vs_reduced_Kron}, Part (3), one can prove the following statement: consider the partial sums $$P_k := \sum_{0 \leq i \leq k} (-1)^i \bar{g}^{\lambda^{(i)}}_{\mu, \tau}$$ Then for any $k\geq 0$, $$P_{2k} \leq g^{\widetilde{\lambda}(n)}_{\widetilde{\mu}(n), \widetilde{\tau}(n)} \leq P_{2k+1}$$ 
 (the above proposition tells us that $g^{\widetilde{\lambda}(n)}_{\widetilde{\mu}(n), \widetilde{\tau}(n)}$ is the stable value of the sequence $\{P_k \}_{k \geq 0}$).
\end{remark}

$$ $$
Let $u = (u_1,u_2, ...)$ be a sequence of integers. Denote $$u^{\dag i}:= (u_1+1, ..., u_{i-1}+1, u_{i+1}, ...)$$
(this is the sequence obtained from $u$ by removing the $i$-th term and adding $1$ to all the previous terms). Of course, if $u$ was a weakly decreasing sequence (Young diagram), so is $u^{\dag i}$.

Also, given a Young diagram $\lambda$, denote by $\bar{\lambda}$ the Young diagram obtained from $\lambda$ by removing the top row (thus given $\lambda\vdash n$, $\mathcal{S}_n (X_{\bar{\lambda}}) = \lambda$). 
With these definitions, the second statement of Proposition \ref{prop:Kron_vs_reduced_Kron} can also be reformulated as follows (in this form it appears in \cite{BOR}):
\begin{proposition}
Let $n \in \bZ_+$, and let $\lambda, \mu, \T$ be three partitions of $n$.
Then 
$$g^{\lambda}_{\mu, \T} =  \sum_{i \geq 1} (-1)^{i+1} \bar{g}^{\lambda^{\dag i}}_{\bar{\mu}, \bar{\tau}} $$
\end{proposition}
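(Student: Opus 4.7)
The plan is to deduce this from Proposition \ref{prop:Kron_vs_reduced_Kron}, Part (2). Given partitions $\lambda, \mu, \tau \vdash n$, I will apply that proposition with $\bar{\lambda}, \bar{\mu}, \bar{\tau}$ in place of $\lambda, \mu, \tau$ (using $n$ itself as the value of the parameter), since by construction $\widetilde{\bar{\lambda}}(n)=\lambda$, $\widetilde{\bar{\mu}}(n)=\mu$, $\widetilde{\bar{\tau}}(n)=\tau$. The hypothesis $n \geq N := \max\{\abs{\bar\lambda}+\bar\lambda_1, \abs{\bar\mu}+\bar\mu_1, \abs{\bar\tau}+\bar\tau_1\}$ is automatic: for example, $\abs{\bar\lambda}+\bar\lambda_1 = (n-\lambda_1) + \lambda_2 \leq n$, and similarly for $\mu, \tau$. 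Applying the proposition therefore gives
\[ g^{\lambda}_{\mu, \tau} \;=\; \sum_{i \geq 0} (-1)^{i}\, \bar{g}^{\bar{\lambda}^{(i)}}_{\bar{\mu}, \bar{\tau}}, \]
where $\{\bar\lambda^{(i)}\}_{i \geq 0}$ denotes the $\stackrel{n}{\sim}$-class of $\bar{\lambda}$.

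The remaining task, and really the only content of the argument, is to identify the members of this class: I claim that $\bar{\lambda}^{(i)} = \lambda^{\dag(i+1)}$ for every $i \geq 0$. This I would verify by induction on $i$, directly from the recursive description in Lemma \ref{lem:nu_classes_struct}(2). The base case $\bar{\lambda}^{(0)} = \bar{\lambda} = (\lambda_2, \lambda_3, \ldots) = \lambda^{\dag 1}$ is immediate from the definition of $\lambda^{\dag 1}$. For the step from $\bar\lambda^{(0)}$ to $\bar\lambda^{(1)}$, the prescribed strip in row $1$ has length $n - \abs{\bar\lambda} - \bar\lambda_1 + 1 = \lambda_1 - \lambda_2 + 1$, so the new first row becomes $\lambda_2 + (\lambda_1 - \lambda_2 + 1) = \lambda_1 + 1$, yielding $\bar\lambda^{(1)} = (\lambda_1+1, \lambda_3, \lambda_4, \ldots) = \lambda^{\dag 2}$. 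For $i \geq 2$, the strip in row $i$ has length $\bar\lambda_{i-1} - \bar\lambda_{i} + 1 = \lambda_i - \lambda_{i+1} + 1$, and applied to the inductively known shape $\lambda^{\dag i}$ (whose $i$-th row has length $\lambda_{i+1}$) this promotes the $i$-th row to length $\lambda_i + 1$, producing $\lambda^{\dag(i+1)}$.

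Once this identification is in place, the desired formula falls out by reindexing: substituting $j = i+1$,
\[ g^{\lambda}_{\mu, \tau} \;=\; \sum_{i \geq 0} (-1)^{i}\, \bar{g}^{\lambda^{\dag (i+1)}}_{\bar{\mu}, \bar{\tau}} \;=\; \sum_{j \geq 1} (-1)^{j+1}\, \bar{g}^{\lambda^{\dag j}}_{\bar{\mu}, \bar{\tau}}. \]
The sum is finite because Corollary \ref{cor:red_Kron_zero} forces $\bar{g}^{\lambda^{\dag j}}_{\bar\mu,\bar\tau} = 0$ once $\abs{\lambda^{\dag j}}$ exceeds $\abs{\bar\mu}+\abs{\bar\tau}$, and $\abs{\lambda^{\dag j}}$ grows strictly in $j$. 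There is no genuinely hard step here; the only place requiring care is the combinatorial bookkeeping identifying $\bar\lambda^{(i)}$ with $\lambda^{\dag(i+1)}$, in particular making sure that the index shift between the two notations is correctly tracked.
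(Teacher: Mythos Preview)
Your proposal is correct and follows essentially the same approach as the paper: both reduce to Proposition~\ref{prop:Kron_vs_reduced_Kron}(2) applied to $\bar\lambda,\bar\mu,\bar\tau$ at $t=n$, and then identify the $\stackrel{n}{\sim}$-class $\{\bar\lambda^{(i)}\}_{i\geq 0}$ with $\{\lambda^{\dag(i+1)}\}_{i\geq 0}$ by induction via Lemma~\ref{lem:nu_classes_struct}. Your version is slightly more explicit about verifying the hypothesis $n\geq N$ and about the final reindexing, but the argument is the same.
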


\begin{proof}
 First of all, note that $\bar{\lambda}$ satisfies: $$\abs{\bar{\lambda}} + \bar{\lambda}_1 = n - \lambda_1 +\lambda_2 \leq n$$ 
 Thus $\bar{\lambda}$ is the minimal element in a non-trivial $\stackrel{n}{\sim}$-class. All we need to do is check that $\{\lambda^{\dag i}\}_{i \geq 1}$ is exactly the $\stackrel{n}{\sim}$-class of $\bar{\lambda}$.
 
 Denote by $\{\bar{\lambda}^{(i)}\}_{i \geq 0}$ the $\stackrel{n}{\sim}$-class of $\bar{\lambda}$. Of course, $$\bar{\lambda}^{(0)} = \bar{\lam} = \lam^{\dag 1}$$
 
 Now, by Lemma \ref{lem:nu_classes_struct}, we have:
 \begin{align*}
  &\bar{\lambda}^{(1)} = (\bar{\lam}_1+ n -\abs{\bar{\lambda}} -\bar{\lambda}_1 +1, \bar{\lam}_2, \bar{\lam}_3, ...) = \\
  &=( n - (n-\lam_1) +1, \bar{\lam}_2, \bar{\lam}_3, ...) = (\lam_1+1, \lam_3, \lam_4, ...) = \lam^{\dag 2}
 \end{align*}
 and proceeding by induction on $i$, we have (for $i>1$):
 \begin{align*}
  &\bar{\lambda}^{(i)} = (\bar{\lam}^{(i-1)}_1, \bar{\lam}^{(i-1)}_2, ..., \bar{\lam}^{(i-1)}_{i-1}, \bar{\lam}^{(i-1)}_i - \bar{\lam}_i + \bar{\lam}_{i-1}+1, \bar{\lam}^{(i-1)}_{i+1},...) =\\
  & = (\lam_1+1, \lam_2+1, ..., \lam_{i-2}+1, \lam_{i-1}+1, \lam_{i+1} -\bar{\lam}_i + \bar{\lam}_{i-1}+1, \lam_{i+2}, ...) = \lam^{\dag (i+1)}
  \end{align*}
  Thus we proved that the sequences $\{\lambda^{\dag i}\}_{i \geq 1}$, $\{\bar{\lambda}^{(i)}\}_{i \geq 0}$ of Young diagrams coincide, as wanted.
 \end{proof}

\subsection{New formulas involving reduced Kronecker coefficients}\label{ssec:new_form_Kron}

In this subsection we present close companions of the formula in Proposition \ref{prop:Kron_vs_reduced_Kron}, which are based on the following standard property of rigid symmetric monoidal abelian categories: given an object $X$ and a projective object $P$, the tensor product $X \otimes P$ is again projective. Note that the second part of Proposition \ref{prop:alt_sum_reduced_Kron} is just a generalization of Proposition \ref{prop:Kron_vs_reduced_Kron}, Part (2).

\begin{proposition}\label{prop:alt_sum_reduced_Kron}
 Let $\lam, \mu, \T$ be three Young diagrams. Let $n \in \bZ$, $ \abs{\lam} + \lam_1 \leq n$. 
 
 Denote by $\{ \lambda^{(i)} \}_{i \geq 0}$ the $\stackrel{n}{\sim}$-class of $\lambda$ ($\lambda = \lambda^{(0)}$ since  $n \geq \abs{\lambda} +\lambda_1$). 
 \begin{itemize}
  \item If $X_{\mu}$ lies in a trivial $\stackrel{n}{\sim}$-class (equivalently, if $n \in \{ \abs{\mu} +\mu_l -l : \, l= 1, ..., \abs{\mu}\}$), then 
  $$ \sum_{i \geq 0} (-1)^{i} \bar{g}^{\lambda^{(i)}}_{\mu, \tau} =[X_{\mu} \otimes X_{\tau}: X_{\lambda = \lambda^{(0)}}]_{t=n} = 0$$
  \item Assume both $X_{\mu}$ and $X_{\T}$ lie in non-trivial $\stackrel{n}{\sim}$-classes, denoted by $\{\mu^{(i)} \}_{i \geq 0}$ and $\{\T^{(i)} \}_{i \geq 0}$ respectively (these classes are not necessarily distinct). Let $k, l$ be such that $\mu = \mu^{(k)}$, $\T = \T^{(l)}$. Then 
  $$ \sum_{i \geq 0} (-1)^{i} \bar{g}^{\lambda^{(i)}}_{\mu, \tau} = (-1)^{k+l} g^{\widetilde{\lambda}(n)}_{\widetilde{\mu^{(0)}}(n), \widetilde{\tau^{(0)}}(n)}$$
 \end{itemize}
\end{proposition}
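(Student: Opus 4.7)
The plan is to apply Lemma \ref{lem:lifting_multip} to expand $[X_\mu \otimes X_\T : X_\lambda]_{t=n}$ as an alternating sum of reduced Kronecker coefficients, and then to use projectivity in the abelian envelope $\underline{Rep}^{ab}(S_{t=n})$ (as described in Proposition \ref{prop:obj_ab_env}) to force certain multiplicities to vanish. The crucial observation is that the hypothesis $\abs{\lambda}+\lambda_1 \leq n$ makes $\lambda = \lambda^{(0)}$ the minimum of a non-trivial $\stackrel{n}{\sim}$-class, so $X_\lambda$ is a simple but non-projective object of $\underline{Rep}^{ab}(S_{t=n})$, whereas every other indecomposable $X_\rho$ (those with $\rho$ in a trivial class or $\rho = \rho^{(i)}$ for $i \geq 1$) is projective. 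Combined with the hint that tensor products with projective objects are projective, this gives the key vanishing: whenever one of the factors $X_\mu$, $X_\T$ is projective, the product $X_\mu \otimes X_\T$ decomposes in $\underline{Rep}(S_{t=n})$ as a direct sum of projective indecomposables, none of which is $X_\lambda$, so $[X_\mu \otimes X_\T : X_\lambda]_{t=n} = 0$.

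For Case (1), the trivial-class hypothesis on $\mu$ makes $X_\mu$ simple projective, so $[X_\mu \otimes X_\rho : X_\lambda]_{t=n} = 0$ for \emph{every} Young diagram $\rho$; this yields the rightmost equality. To deduce $\sum_i (-1)^i \bar{g}^{\lambda^{(i)}}_{\mu, \T} = 0$, plug the lift formula from Lemma \ref{lem:nu_classes_lift} into Lemma \ref{lem:lifting_multip}: when $\T$ lies in a trivial class or equals $\T^{(0)}$, the lift is trivial and the identification is immediate. When $\T = \T^{(l)}$ with $l \geq 1$, writing $A_l := [X_\mu \otimes X_{\T^{(l)}} : X_\lambda]_{t=n}$ and $B_l := \sum_j (-1)^j \bar{g}^{\lambda^{(j)}}_{\mu, \T^{(l)}}$, Lemma \ref{lem:lifting_multip} yields the recursion $A_l = B_l + B_{l-1}$; since every $A_l$ vanishes, induction on $l$ with base $B_0 = A_0 = 0$ gives $B_l = 0$.

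For Case (2) the same scheme produces a two-dimensional analogue. Setting $A_{k,l} := [X_{\mu^{(k)}} \otimes X_{\T^{(l)}} : X_\lambda]_{t=n}$ and $B_{k,l} := \sum_j (-1)^j \bar{g}^{\lambda^{(j)}}_{\mu^{(k)}, \T^{(l)}}$ and expanding both lifts, Lemma \ref{lem:lifting_multip} gives the recursion $A_{k,l} = B_{k,l} + B_{k-1,l} + B_{k,l-1} + B_{k-1,l-1}$ for $k, l \geq 1$, with the evident degenerations on the boundary $k = 0$ or $l = 0$. Vanishing of $A_{k,l}$ for $(k,l) \neq (0,0)$ follows from projectivity, since at least one of $X_{\mu^{(k)}}, X_{\T^{(l)}}$ is then projective. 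For the base case, $A_{0,0} = B_{0,0}$ and the symmetric monoidal functor $\mathcal{S}_n \colon \underline{Rep}(S_{t=n}) \to Rep(S_n)$---which sends $X_{\mu^{(0)}}, X_{\T^{(0)}}, X_\lambda$ to $\widetilde{\mu^{(0)}}(n), \widetilde{\T^{(0)}}(n), \widetilde{\lambda}(n)$ respectively, all three parameters being minima of non-trivial classes---identifies $A_{0,0}$ with the ordinary Kronecker coefficient $g^{\widetilde{\lambda}(n)}_{\widetilde{\mu^{(0)}}(n), \widetilde{\T^{(0)}}(n)}$. An induction on $k + l$ using the recursion and the vanishing of $A_{k,l}$ for $(k,l) \neq (0,0)$ then produces $B_{k,l} = (-1)^{k+l} A_{0,0}$.

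The main obstacle is the sign bookkeeping in the two-dimensional induction of Case (2), but this is routine once the recursion is in place. The key conceptual input---absent from the proof of Proposition \ref{prop:Kron_vs_reduced_Kron}, which only used lifting and genericity---is the use of projectivity in $\underline{Rep}^{ab}(S_{t=n})$ to make the multiplicities $A_{k,l}$ vanish away from $(0,0)$.
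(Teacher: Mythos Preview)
Your proposal is correct and follows essentially the same approach as the paper's proof: both arguments hinge on the fact that $X_{\lambda}$ is simple non-projective in $\underline{Rep}^{ab}(S_{t=n})$, use the tensor-with-projective-is-projective principle to force $[X_{\mu}\otimes X_{\T}:X_{\lambda}]_{t=n}=0$ whenever one factor is projective, and then combine Lemma~\ref{lem:lifting_multip} with Lemma~\ref{lem:nu_classes_lift} to set up a one-variable (Case~1) or two-variable (Case~2) recursion solved by induction on $l$ (resp.\ $k+l$), with the base case $k=l=0$ identified via $\mathcal{S}_n$ exactly as in Proposition~\ref{prop:Kron_vs_reduced_Kron}. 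Your explicit $A_{k,l}/B_{k,l}$ notation makes the recursion and the sign computation somewhat more transparent than the paper's presentation, but the underlying argument is the same.
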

\begin{remark}
 Again, by Corollary \ref{cor:red_Kron_zero} the above sums are finite (due to the fact that $\{\abs{\lambda^{(i)}} \}_i$ is a strongly increasing sequence).
\end{remark}
\begin{proof}
First of all, recall that the case when $n \geq \abs{\mu} + \mu_1, \abs{\T} + \T_1$ has been studied in Proposition \ref{prop:Kron_vs_reduced_Kron}. So without loss of generality, we can assume that $ n < \abs{\mu} + \mu_1$.

Notice that the condition $ n < \abs{\mu} + \mu_1$ implies that $\mu$ is either in a trivial $\stackrel{n}{\sim}$-class, or a non-minimal element in a non-trivial $\stackrel{n}{\sim}$-class.

Now, consider the category $\underline{Rep}^{ab}(S_{t=n})$; this is a rigid symmetric monoidal abelian category. From Proposition \ref{prop:obj_ab_env}, we know that $X_{\mu}$ is a projective object in $\underline{Rep}^{ab}(S_{t=n})$, while $X_{\lam}$ is a simple, but not projective object. 

Thus $X_{\mu} \otimes X_{\tau}$ is again a projective object. Decomposing it as a sum of indecomposable objects in $\underline{Rep}(S_{t=n})$, we see that all the summands must be projective objects in $\underline{Rep}^{ab}(S_{t=n})$, and so $$[X_{\mu} \otimes X_{\tau}: X_{\lambda = \lambda^{(0)}}]_{t=n} = 0$$

By Lemma \ref{lem:lifting_multip}, we have:
$$ 0 = [X_{\mu} \otimes X_{\T}: X_{\lam}]_{t=n} = \sum_{i \geq 0} (-1)^i \dim_{Frak(\bC[[T]])} \Hom_{\underline{Rep}(S_T)}(lift_{t=n}(X_{\mu}) \otimes lift_{t=n}(X_{\tau}), X_{\lambda^{(i)}}) $$

\begin{itemize}
 \item Assume $X_{\mu}$ lies in a trivial $\stackrel{n}{\sim}$-class. Then
$lift_{t=n}(X_{\mu}) = X_{\mu}$, and so 
\begin{equation}\label{eq:sum_mult_zero}
 \sum_{i \geq 0} (-1)^i \dim_{Frak(\bC[[T]])} \Hom_{\underline{Rep}(S_T)}(X_{\mu} \otimes lift_{t=n}(X_{\tau}), X_{\lambda^{(i)}}) = 0
\end{equation}

 First, assume $lift_{t=n}(X_{\T}) = X_{\T}$. In this case, Equation \eqref{eq:sum_mult_zero} becomes
 $$\sum_{i \geq 0} (-1)^i \dim_{Frak(\bC[[T]])} \Hom_{\underline{Rep}(S_T)}(X_{\mu} \otimes X_{\tau}, X_{\lambda^{(i)}}) = \sum_{i \geq 0} (-1)^{i} \bar{g}^{\lambda^{(i)}}_{\mu, \tau} = 0$$ and we are done.
 
It remains to check the case when $lift_{t=n}(X_{\T}) \neq X_{\T}$. In this case, $\T$ lies in a non-trivial $\stackrel{n}{\sim}$-class. Denote this class by $\{ \tau^{(i)} \}_{i \geq 0}$. 
 We will now prove that $$\sum_{i \geq 0} (-1)^{i} \bar{g}^{\lambda^{(i)}}_{\mu, \tau^{(j)}} = 0$$ for any $j \geq 0$ by induction on $j$.
 $$ $$
 Base: The case when $j=0$ has already been proved.
 
 Step:
 Applying Equation \eqref{eq:sum_mult_zero} to all $\tau^{(j)}$, $j \geq 1$, we get:
  \begin{align*}
   &0 = \sum_{i \geq 0} (-1)^i \dim_{Frak(\bC[[T]])} \Hom_{\underline{Rep}(S_T)}(X_{\mu} \otimes X_{\tau^{(j)}}, X_{\lambda^{(i)}}) + \\
   &+ \sum_{i \geq 0} (-1)^i \dim_{Frak(\bC[[T]])} \Hom_{\underline{Rep}(S_T)}(X_{\mu} \otimes X_{\tau^{(j)}}, X_{\lambda^{(i)}}) = \sum_{i \geq 0} (-1)^{i} \bar{g}^{\lambda^{(i)}}_{\mu, \tau^{(j)}} + \sum_{i \geq 0} (-1)^{i} \bar{g}^{\lambda^{(i)}}_{\mu, \tau^{(j-1)}}
  \end{align*}
  So assuming $$\sum_{i \geq 0} (-1)^{i} \bar{g}^{\lambda^{(i)}}_{\mu, \tau^{(j-1)}} = 0$$ we get: $$\sum_{i \geq 0} (-1)^{i} \bar{g}^{\lambda^{(i)}}_{\mu, \tau^{(j)}} = 0$$ and we are done.
  \newline
  
  \item Assume both $X_{\mu}$ and $X_{\T}$ lie in non-trivial $\stackrel{n}{\sim}$-classes, denoted by $\{\mu^{(i)} \}_{i \geq 0}$ and by $\{\T^{(i)} \}_{i \geq 0}$ respectively. Let $k, l$ be such that $\mu = \mu^{(k)}$, $\T = \T^{(l)}$. 
  
  Then 
$$ 0 = [X_{\mu} \otimes X_{\T}: X_{\lam}]_{t=n} = \sum_{i \geq 0} (-1)^i \dim_{Frak(\bC[[T]])} \Hom_{\underline{Rep}(S_T)}((X_{\mu^{(k)}} \oplus X_{\mu^{(k-1)}} ) \otimes (X_{\tau^{(l)}} \oplus X_{\tau^{(l-1)}}), X_{\lambda^{(i)}}) $$ and so 
$$\sum_{i \geq 0} (-1)^{i} \bar{g}^{\lambda^{(i)}}_{\mu^{(k)}, \tau^{(l)}} + \sum_{i \geq 0} (-1)^{i} \bar{g}^{\lambda^{(i)}}_{\mu^{(k)}, \tau^{(l-1)}} +\sum_{i \geq 0} (-1)^{i} \bar{g}^{\lambda^{(i)}}_{\mu^{(k-1)}, \tau^{(l)}} +\sum_{i \geq 0} (-1)^{i} \bar{g}^{\lambda^{(i)}}_{\mu^{(k-1)}, \tau^{(l-1)}} = 0$$

By induction on $k +l$, we can now prove that
 $$ \sum_{i \geq 0} (-1)^{i} \bar{g}^{\lambda^{(i)}}_{\mu, \tau} = (-1)^{k+l} g^{\widetilde{\lambda}(n)}_{\widetilde{\mu^{(0)}}(n), \widetilde{\tau^{(0)}}(n)}$$
 (the base case $k+l = 0$ was proved in Proposition \ref{prop:Kron_vs_reduced_Kron}).
\end{itemize}

%
\end{proof}

\end{document}